\documentclass[oneside,11pt]{amsart}
\usepackage[utf8]{inputenc}
\usepackage[english]{babel}
\usepackage{libertine}
\usepackage{floatflt}
\usepackage{comment}
\usepackage{blindtext}
\usepackage{enumitem}
\usepackage{amsthm}
\usepackage{subfig}
\usepackage{listings}
\usepackage{amscd}
\usepackage{centernot}
\usepackage{mathtools}
\usepackage{stmaryrd}
\usepackage{listingsutf8}
\usepackage{setspace}
\usepackage{amsmath}
\usepackage{framed}
\usepackage{minibox}
\usepackage{float}
\usepackage{wrapfig}
\usepackage{longtable}
\usepackage[strict]{changepage}
\usepackage{nicefrac}
\usepackage{bbm}
\usepackage{amssymb}               
\usepackage{amsthm}   
\usepackage{mathtools}
\usepackage{amsfonts}  
\usepackage{mathrsfs}
\setlength{\marginparwidth}{2cm}
\usepackage{todonotes}
\usepackage{esvect}
\usepackage{scalerel}
\usepackage{stackengine,wasysym}

\usepackage[toc,page]{appendix}
\usepackage{ragged2e}

\makeatletter
\renewenvironment{thebibliography}[1]
     {\section*{\refname}%
      \normalfont\normalsize 
      \list{\@biblabel{\@arabic\c@enumiv}}%
           {\settowidth\labelwidth{\@biblabel{#1}}%
            \leftmargin\labelwidth
            \advance\leftmargin\labelsep
            \usecounter{enumiv}}%
      \sloppy\clubpenalty4000\widowpenalty4000%
      \sfcode`\.=\@m}
     {\def\@noitemerr{\@latex@warning{Empty `thebibliography' environment}}%
      \endlist}
\makeatother
\usepackage{tikz-cd}
\usetikzlibrary{positioning, angles}
\usetikzlibrary{arrows}
\usetikzlibrary{decorations.pathmorphing, decorations.pathreplacing}
\usetikzlibrary{decorations.markings}
\usepackage{mathtools}
\usepackage{lipsum}
\usepackage{xfp}

\usepackage{tikz}
\usepackage{tikz-3dplot}
\usetikzlibrary{3d, angles, arrows.meta, calc, decorations.markings, 
                decorations.pathmorphing, decorations.pathreplacing, 
                positioning, shapes,shadings}
\usepackage{pgfplots}
\usepackage{pgfmath}
\usepackage{xfp}
\usepackage{hyperref}
\tikzset{
  on each segment/.style={
    decorate,
    decoration={
      show path construction,
      moveto code={},
      lineto code={
        \path [#1]
        (\tikzinputsegmentfirst) -- (\tikzinputsegmentlast);
      },
      curveto code={
        \path [#1] (\tikzinputsegmentfirst)
        .. controls
        (\tikzinputsegmentsupporta) and (\tikzinputsegmentsupportb)
        ..
        (\tikzinputsegmentlast);
      },
      closepath code={
        \path [#1]
        (\tikzinputsegmentfirst) -- (\tikzinputsegmentlast);
      },
    },
  },
  mid arrow/.style={postaction={decorate,decoration={
        markings,
        mark=at position .5 with {\arrow[#1]{stealth}}
      }}},
}
\usepackage[toc,page]{appendix}
\usepackage{ragged2e}
\usepackage{fancyhdr}\setlength{\headheight}{28pt}\fancyfoot[LE,RO]{\thepage}\fancyfoot[C]{}\fancyhead[R]{\sectionmark}

\usepackage{fancyhdr}\oddsidemargin = 2pt       \topmargin = 0pt\headheight = 12pt          \headsep = 25pt\textheight = 609pt         \textwidth = 424pt\marginparsep = 11pt        \marginparwidth = 54pt\footskip = 30pt            \marginparpush = 5pt \hoffset = 0pt              \voffset = 0pt\paperwidth = 597pt         \paperheight = 800pt
\def\Xint#1{\mathchoice 
  {\XXint\displaystyle\textstyle{#1}}%
  {\XXint\textstyle\scriptstyle{#1}}%
  {\XXint\scriptstyle\scriptscriptstyle{#1}}%
  {\XXint\scriptscriptstyle\scriptscriptstyle{#1}}%
  \!\int}
\def\XXint#1#2#3{{\setbox0=\hbox{$#1{#2#3}{\int}$} 
  \vcenter{\hbox{$#2#3$}}\kern-.5\wd0}}
\def\dashint{\Xint-}

\newcommand{\R}{\ensuremath{\mathbb{R}}}

\newcommand{\norm}[1]{{\ensuremath{||{#1}||}}}

\let\theta\vartheta
\let\phi\varphi
\let\epsilon\varepsilon
\let\bar\overline

\setcounter{tocdepth}{2}
\theoremstyle{plain}
\newtheorem{thm}{Theorem}[section]
\newtheorem*{thm*}{Theorem}
\newtheorem{prop}[thm]{Proposition}
\newtheorem{constr}[thm]{Construction}

\newtheorem{lem}[thm]{Lemma}

\theoremstyle{definition}
\newtheorem*{defn*}{Definition}
\newtheorem{defn}[thm]{Definition}

\newtheorem{oss}[thm]{Remark}
\newtheorem{example}{Example}
\newread\tmp

\usepackage{etoolbox}
\makeatletter
\providecommand{\subtitle}[1]{
  \apptocmd{\@title}{\vspace*{0.2cm}\par {\small\normalfont #1 \par}}{}{}
}
\makeatother
\counterwithout*{equation}{section}
\def\pri{\hbox to 10pt{\hfil\hbox to 0.4pt{\vrule height5pt width0.4pt
                 depth0pt}\vrule width5pt height0.4pt depth0pt\hfil}}
\usepackage{units}
\newcommand{\BV}{\mathop{\rm BV}\nolimits}

\addto\captionsenglish{}

\makeatletter
\newcommand{\tpitchfork}{%
  \vbox{
    \baselineskip\z@skip
    \lineskip-.52ex
    \lineskiplimit\maxdimen
    \m@th
    \ialign{##\crcr\hidewidth\smash{$-$}\hidewidth\crcr$\pitchfork$\crcr}
  }%
}
\makeatother
\newcommand{\gn}{{\bf n}}
\newcommand{\gu}{{\bf u}}
\usepackage{xcolor}
\usepackage{graphicx}
\usepackage{listings}
\usepackage{inconsolata}  
\numberwithin{equation}{section}
\newcommand{\gt}{{\bf t}}
\newcommand{\gnu}{{\bf \nu}}

\date{}
\usepackage{tikz}
\usetikzlibrary{3d, calc, shadings}

\title[spherical elastica]{ Weak elastic energy of rectifiable curves in the sphere}

\author[D. Mucci]{Domenico Mucci} 
\address{Domenico Mucci\\ Dipartimento di Scienze Matematiche, Fisiche e Informatiche, Universit\`a di Parma\\ Campus - Parco Area delle Scienze 53/A, 43124 Parma, Italy}
\email{\url{domenico.mucci@unipr.it}}

\author[A. Saracco]{Alberto Saracco} 
\address{Alberto Saracco\\ Dipartimento di Scienze Matematiche, Fisiche e Informatiche, Universit\`a di Parma\\ Campus - Parco Area delle Scienze 53/A, 43124 Parma, Italy}
\email{\url{alberto.saracco@unipr.it}}

\author[C. Sopio]{Cristian Sopio}  
\address{Cristian Sopio\\Dipartimento di Scienze Matematiche, Fisiche e Informatiche, Universit\`a di Parma\\ Campus - Parco Area delle Scienze 53/A, 43124 Parma, Italy}  \email{\url{cristian.sopio@unipr.it}}
\keywords{Irregular spherical curves, curvature, elastic energy, relaxation}

\makeatother
\begin{document}
\nocite{MS2}
\onehalfspacing
\maketitle

\begin{abstract}
    We introduce for any exponent $p>1$ the $p$-curvature functional for rectifiable curves in the two-dimensional sphere. We prove that this functional is finite and agrees with the integral of the geodesic curvature raised to the power $p$ on curves whose arc length parameterization is in the Sobolev class $W^{2,p}$.
\end{abstract}

\section{Introduction}
This work is a first approach to the project of studying
the $p$-\emph{curvature} of \emph{irregular curves} in Riemannian surfaces $M$. For a comprehensive introduction to irregular curves, we refer to \cite{alexandrov2012general,Re,Mil,Mi53}.

In \cite{Mil}, it has been introduced the definition of \emph{total curvature} $\mathrm{TC}(c)$ for a rectifiable curve $c$
 as the supremum of the \emph{rotation} of inscribed polygonals.

An {\em intrinsic} theory of rectifiable curves with finite total curvature in a Riemannian surfaces $M$ has been recently developed by the first two authors in \cite{MS2}  for the plastic case ($p=1$). Here we want to address, following \cite{MS1}, the elastic case 
 ($p>1$) for $M=\mathbb{S}^2$, the unit sphere in $\mathbb R^3$.
When the sectional curvature is positive, the expected monotonicity formula fails to hold, see Remark~\ref{monot}. To obtain a good intrinsic notion of total curvature $\mathrm{TC}_M(c)$, we need to use the \emph{modulus} $\mu_c(P)$ of an inscribed polygonal $P$ introduced in \cite{alexandrov2012general}, that is equal to the maximum of the geodesic diameter of the arcs of $c$ determined by the consecutive vertices in $P$. By \cite{C}, notice that a representation formula holds for the total curvature of piecewise smooth curves $c$  
\begin{equation}\label{ismooth} 
    \mathrm{TC}_M(c)=\int_c|k_M(c)|\,ds +\sum_i|\theta_i|, 
\end{equation}
where $|\theta_i|\in [0,\pi]$ are the \emph{turning angles} at the corner points of the curve $c$ and $k_M(c)$ is the curvature vector at smooth points of $c$.

The plastic case is completely different from the elastic one, even in the Euclidean setting. Indeed, as we can see from \eqref{ismooth}, a curve with finite total curvature may have 
corner points. In the elastic case, as it is pointed out in \cite[Section 4b]{MS1}, if the \emph{total} $p$-\emph{curvature} is finite, the curve turns out to be in the Sobolev space $W^{2,p}(I,\R^n)$. Also in the spherical setting, a rectifiable curve in $\mathbb{S}^2$ of finite total curvature may have corner points, but as observed in Remark~\ref{NoCorners}, the curve in the elastic case cannot have any corner points.

We recall that for general rectifiable curves with finite total curvature in a Riemannian surface $M$, a representation formula for the total curvature holds true using the language of functions of bounded variation, see \cite{MS2}.  
 Here, for $p>1$ we obtain the expected Sobolev regularity.
\subsection{Results}
We concentrate on studying the integral of the geodesic curvature raised to the power $p$ for rectifiable curves in the sphere $\mathbb{S}^2$. We extend the results in \cite{MS1} obtained by the first two authors in the Euclidean setting that we expect to hold in 
 generic compact Riemannian surfaces.

Our main result is to obtain a geometric functional $\mathcal{F}_p(c)$ on a rectifiable curve $c$ parametrized by arc length that relies on the Sobolev regularity of the curve. It turns out that if $\mathcal{F}_p(c)<\infty$ the curve is in $W^{2,p}(I,\mathbb{S}^2)$ and $\mathcal{F}_p(c)$ coincides   with the integral of the $p$-power of  the modulus of the geodesic curvature.

In the same spirit as Lebesgue-Serrin's relaxed functional, we introduce the {\em $p$-curvature} functional $\mathcal{F}_p(c)$ of rectifiable curves $c$ in $\mathbb{S}^2$ as
$$ \mathcal{F}_p(c):=\inf\Bigl\{\liminf_{h\to\infty}\mathbf{k}_p(P_h)\mid \{P_h\}\ll c\,,\,\,\mu_c(P_h)\to 0\Bigr\}\quad\mbox{ for }p> 1,$$
where the $p$-\emph{rotation} $\mathbf{k}_p(P)$ of an inscribed polygonal $P$ is obtained distributing the turning angles at corner points of $P$ via an optimal curve $\gamma(P)$ of locally constant geodesic curvature and integrating its geodesic curvature raised to power $p$, see Section~\ref{Sec:pcurv}, Definition~\ref{F_p} and Appendix~\ref{AA} for the construction of the curve $\gamma(P)$.
\begin{thm}
     Let $c:[0,L]\rightarrow\mathbb{S}^2$ be a rectifiable and open curve in $\mathbb{S}^2$ parametrized in arc-length,  and let $p>1$. Then $$\mathcal{F}_p(c)<\infty 
\Longleftrightarrow c\in W^{2,p}([0,L],\mathbb{S}^2)$$ and in this case, there holds $$\mathcal{F}_p(c)=\int_c|k_{\mathbb{S}^2}|^p\,ds=\int_0^L\norm{\Ddot{c}^\top}^p\,ds.$$
\end{thm}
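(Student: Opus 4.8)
The plan is to prove the equivalence together with the energy identity by establishing the two matching inequalities $\mathcal{F}_p(c)\le\int_0^L\norm{\ddot c^\top}^p\,ds$ and $\mathcal{F}_p(c)\ge\int_0^L\norm{\ddot c^\top}^p\,ds$; the compactness step in the second will automatically produce the Sobolev regularity, so no separate argument for the implications is needed. Throughout I parametrize each interpolant $\gamma(P)$ of Appendix~\ref{AA} by arc length, so that differentiating the constraint $\gamma\cdot\gamma\equiv 1$ twice gives $\ddot\gamma\cdot\gamma=-1$, whence $\ddot\gamma^\perp=-\gamma$ and $\ddot\gamma^\top=\ddot\gamma+\gamma$. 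In particular $\norm{\ddot\gamma}^2=1+\norm{\ddot\gamma^\top}^2=1+|k_{\mathbb{S}^2}(\gamma)|^2$, so that an $L^p$ bound on the geodesic curvature is equivalent to a $W^{2,p}$ bound on $\gamma$, and $\mathbf{k}_p(P)=\int\norm{\ddot\gamma(P)^\top}^p\,ds$.

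For the upper bound, assume $c\in W^{2,p}\subset C^1$ and let $P_h$ be the polygonal inscribed at the nodes $c(t_i^h)$ of a sequence of partitions of $[0,L]$ with vanishing mesh, so that $\{P_h\}\ll c$ and $\mu_c(P_h)\to 0$. On each subarc $I_i$ of length $\ell_i$ the geodesic turning of $P_h$ agrees, up to a spherical correction that vanishes with the mesh, with $\int_{I_i}k_{\mathbb{S}^2}(c)\,ds$, so the optimal constant-curvature piece of $\gamma(P_h)$ carries as its signed geodesic curvature the average $\frac{1}{\ell_i}\int_{I_i}k_{\mathbb{S}^2}(c)\,ds$. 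By convexity of $t\mapsto|t|^p$ (Jensen) applied piecewise, $\mathbf{k}_p(P_h)\le\int_0^L|k_{\mathbb{S}^2}(c)|^p\,ds+o(1)$ as the mesh shrinks, whence $\mathcal{F}_p(c)\le\int_0^L\norm{\ddot c^\top}^p\,ds<\infty$; this proves $c\in W^{2,p}\Rightarrow\mathcal{F}_p(c)<\infty$ and one half of the identity.

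For the lower bound, assume $\mathcal{F}_p(c)<\infty$ and choose an admissible sequence $\{P_h\}\ll c$ with $\mu_c(P_h)\to 0$ and $\mathbf{k}_p(P_h)\to\mathcal{F}_p(c)$ (passing to a subsequence so the liminf is a limit). The uniform bound $\int|k_{\mathbb{S}^2}(\gamma(P_h))|^p\,ds\le C$ together with the length convergence $\mathrm{length}(\gamma(P_h))\to L$ makes the arc-length interpolants, rescaled to the common domain $[0,L]$, uniformly bounded in $W^{2,p}$. By weak $W^{2,p}$ compactness and the compact embedding $W^{2,p}\hookrightarrow C^1$, a subsequence converges weakly in $W^{2,p}$ and strongly in $C^1$; since $\gamma(P_h)$ passes through the nodes $c(t_i^h)$ and $\mu_c(P_h)\to 0$, the limit is exactly $c$, so $c\in W^{2,p}([0,L],\mathbb{S}^2)$. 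Writing $\ddot\gamma^\top=\ddot\gamma+\gamma$, the strong $C^1$ (hence $L^p$) convergence of the curves and the weak $L^p$ convergence of their second derivatives give $\ddot\gamma(P_h)+\gamma(P_h)\rightharpoonup\ddot c+c$ in $L^p$, so weak lower semicontinuity of $v\mapsto\int\norm{v}^p$ yields $\int_0^L\norm{\ddot c^\top}^p\,ds\le\liminf_h\mathbf{k}_p(P_h)=\mathcal{F}_p(c)$, completing both the regularity and the remaining inequality.

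The main obstacle is to control the constant-curvature interpolant $\gamma(P)$ on $\mathbb{S}^2$ precisely enough to close both estimates, since the positive sectional curvature invalidates the Euclidean monotonicity formula exploited in \cite{MS1} (see Remark~\ref{monot}). In the upper bound one must verify that distributing the geodesic turning angles into optimal constant-curvature arcs introduces only a Gauss--Bonnet defect of lower order, and this is exactly where the modulus $\mu_c$ enters, keeping the spherical correction subordinate to the mesh. In the lower bound the delicate points are the uniform length convergence $\mathrm{length}(\gamma(P_h))\to L$ and the reparametrization to a fixed domain while preserving the $W^{2,p}$ bound and the identification of the weak limit; a mesh-uniform comparison of geodesic chords with their optimal circular replacements on the sphere is what legitimizes the compactness and lower-semicontinuity argument.
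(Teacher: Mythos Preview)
Your compactness/lower-semicontinuity direction (what you call the ``lower bound'') is essentially the paper's proof of Theorem~\ref{thm1}: take an optimal sequence $\{P_h\}$, reparametrize the interpolants $\gamma(P_h)$ onto $[0,L]$, extract a weak $W^{2,p}$ limit using the uniform bound on $\int|k_{\mathbb{S}^2}(\gamma(P_h))|^p$ and the length convergence $L_h\to L$, identify the limit with $c$, and apply weak lower semicontinuity of $v\mapsto\int\|v\|^p$ to $\ddot\gamma^\top=\ddot\gamma+\gamma$. This direction is fine.

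The other direction (your ``upper bound'', the paper's Theorem~\ref{lowerbound}) is where your proposal and the paper part ways, and where your argument has a genuine gap. Your Jensen step rests on the assertion that ``the optimal constant-curvature piece of $\gamma(P_h)$ carries as its signed geodesic curvature the average $\tfrac{1}{\ell_i}\int_{I_i}k_{\mathbb{S}^2}(c)\,ds$''. This is not what the construction of Appendix~\ref{AA} gives: the arc at corner $i$ has constant curvature $\cot\Phi$, an explicit function of $\theta_i$ and $\ell$, and its total turning is (up to a holonomy term) the \emph{polygonal} turning angle $\theta_i$, not an integral of $k_{\mathbb{S}^2}(c)$ over any designated subarc of $c$. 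To convert your sketch into a proof you would need two separate quantitative comparisons: (i) between the curvature/length of the interpolant arc and $\theta_i,\ell$, and (ii) between $\theta_i$ and $\int_{I_i}k_{\mathbb{S}^2}(c)\,ds$ for some interval $I_i$, both with errors controlled well enough that after raising to the $p$-th power and summing over $\sim L/\ell$ corners the defect still vanishes. You acknowledge this (``Gauss--Bonnet defect of lower order'') but do not carry it out; the error in (ii) in particular is of order $\ell$ per corner, which a priori sums to $O(1)$, not $o(1)$, after the $\ell^{1-p}$ weighting.

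The paper avoids this accounting entirely. It first bounds $\mathbf{k}_p(P_h)\le\tilde c_\epsilon\sum_i\ell^{1-p}\theta_i^p$ directly from the explicit formula in Appendix~\ref{AA}, and then bounds $\int_0^L|k_{\mathbb{S}^2}(c)|^p$ \emph{from below} by the same discrete sum via a conformal (stereographic) comparison: Proposition~\ref{PropPLA} turns the spherical curvature into the planar curvature of the projected curve $\Gamma$ plus a term of size $O(\delta)$, the elementary inequality $|a-b|^p\ge a^p-Cba^{p-1}$ (Lemma after Proposition~\ref{PropPLA}) separates the main term from the correction, the Bruckstein shift-averaging trick together with two applications of Jensen and \cite[Theorem~5.3]{MS1} recover $\sum_i\ell^{1-p}\theta_i^p$ from the planar curvature, and the correction is shown to be $o(1)$ using absolute continuity of $\int|k|^p$ (Remark~\ref{AC}). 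Crucially, conformality preserves the turning angles $\theta_i$, so the Euclidean result applies verbatim to the projected polygonal. Your direct spherical Jensen argument could perhaps be pushed through, but as written it is a heuristic, whereas the paper's conformal reduction to \cite{MS1} is what actually closes the estimate.
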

 Here $\ddot c^\top$ stands for the projection of the Euclidean second derivative vector of the curve $\quad$ $c:I\rightarrow\mathbb{S}^2\subseteq\R^3$ onto the tangent plane $T_{c(s)}\mathbb{S}^2$, see Section~\ref{Sec:geod.curv}.

\subsection{Comments and further directions}
Comparing this work with \cite{MS1}, one can see that for a good approximation sequence $\{P_h\}$ of a curve $c:I\rightarrow\mathbb{S}^2$ the $p$-rotation behaves 
 like in the Euclidean case, more precisely $\mathbf{k}_p(P_h)\sim\sum_i\ell^{1-p}\theta_i^p$. We expect that the same holds for curves in the Hyperbolic plane and, more general, for curves in $\mathrm{CAT}(k)$ two dimensional Riemannian manifolds.

In the proofs, we take advantage of local comparisons with planar curves obtained by means of conformal images of pieces of the spherical curve.
\subsection*{Acknoledgements}
The authors were partially supported by the GNAMPA of INDAM.

\section{Preliminaries}
\subsection{Length and inscribed polygonals}
Consider a curve $c$ in the sphere parameterized by the continuous map
$c: I\to\mathbb{S}^2$, where $ I=[0,L]$.
We say that $P$ is a \emph{polygonal} curve
{\em inscribed} in $c$, say $P \ll c$, if it is obtained by choosing a
finite partition $\mathcal{P}\coloneqq\{0=t_0<\ldots<t_{h}=L\}$ of $I$, say $P=P(\mathcal{P})$,  such that
$P(t_i)=c(t_i)$ for $i=0,\ldots,n$, and $P_{|_{[t_i,t_{i+1}]}}$ is 
 the geodesic arc of $\mathbb{S}^2$ that joins $c(t_i)$ to $c(t_{i+1})$, if the latter are not antipodal points. The length of the polygonal is
$$\mathcal{L}(P)=\sum_{i=0}^{h-1}d_{\mathbb{S}^2}(c(t_{i+1}),c(t_{i}))\,.$$

We denote
$$\mathrm{mesh}\, \mathcal{P}:=\sup_{0\leq i\leq h-1}|t_{i+1}-t_{i}|\,,\quad \mathrm{mesh}\, P:=\sup_{0\leq i\leq h-1}
d_{\mathbb{S}^2}(c(t_{i+1}),c(t_{i})) \,. $$

The {\em length} $\mathcal{L}(c)$ of the curve $c$ is defined by
$$\mathcal{L}(c):=\sup\{\mathcal{L}(P)\mid P\ll c\} $$
and $c$ is said to be {\em rectifiable} if $\mathcal{L}(c)<\infty$.
 In that case, 
taking $P_h=P(\mathcal{P}_h)$, where
$\{\mathcal{P}_h\}$ is any sequence of partitions of $I$ such that
$\mathrm{mesh}\, \mathcal{P}_h\to 0$,
by uniform continuity we get $\mathrm{mesh}\, P_h\to 0$ and the
convergence $\mathcal{L}(P_h)\to\mathcal{L}(c)$ of the length functional.
Finally, if a curve is rectifiable, its arc length parametrization $c(s)$ is Lipschitz continuous and hence, by Rademacher's theorem, 

the derivative $\dot{c}$ exists almost everywhere.

\begin{defn}
    The {\em Fr\'echet distance} $d(c_1,c_2)$ between two rectifiable curves is the infimum, over all strictly monotonic
reparameterizations, of the maximum pointwise distance.
\end{defn} 
 
Therefore, if $d(c_1,c_2)=0$, the two curves are equivalent in the following sense: homeomorphic reparameterizations that approach the infimal value zero will limit to the more general reparameterization that might eliminate or introduce intervals of constancy, {compare \cite{Su}.}
 
\par Moreover, if $\{c_h\}$ is a sequence of rectifiable curves in $\mathbb{S}^2$ such that $d(c_h,c)\to 0$ as $h\to \infty$ for some rectifiable curve $c$, then by lower semicontinuity
 
\begin{equation}
    \mathcal{L}(c)\leq\liminf_{h\to\infty}\mathcal{L}(c_h)\,.
\end{equation} 
\subsection{Total intrinsic curvature}
In this section we point out the difference in the plastic case between 
 the cases of curves in the Euclidean space and in an immersed surface in $\R^3$. Moreover, we observe that for surfaces with positive sectional curvature, the expected monotonicity formula on the rotation of polygonals does not hold.
\subsubsection{Euclidean total curvature}In the Euclidean setting, given a rectifiable curve $c$ and an inscribed polygonal $P$, we recall that its rotation $\mathbf{k}_{\R^n}^*(P)$  is the sum of the exterior angles between consecutive segments. 
\begin{oss}\label{monot}
The following facts hold:
    \begin{itemize}
    \item if $P$ and $P'$ are inscribed polygonals and $P'$ is obtained by adding a vertex in $c$ to the vertexes of $P$, then $\mathbf{k}_{\mathbb{R}^n}^*(P)\leq \mathbf{k}_{\mathbb{R}^n}^*(P')$\,;
 
\item if $c$ has finite total curvature, for each point $p$ in $c$, small open arcs of $c$ with an end point equal to $p$ have small total curvature. \end{itemize} 
\end{oss}
By the above remark, starting with any polygonal $P$ inscribed in $c$ and performing a sequence $P_h$ by adding vertices to $P$, we get that the rotation of $P_h$ increases and the polygonal sequence converges in length to the curve,
 if the mesh of the polygonals goes to zero. Then, the total curvature can be defined as the supremum of the rotation of any polygonal sequence $P_h$ with $\mathrm{mesh}(P_h)\rightarrow 0$.

\subsubsection{Geodesic curvature}\label{Sec:geod.curv}
Assume now that $c$ is a smooth and regular curve supported in $\quad$ $M\subseteq\R^3$ parametrized by arc length. The Darboux frame along $c$ is the triad $(\gt,\gn,\gu)$,
where $\gt(s):=\dot c(s)$ is the unit tangent vector, $\gn(s):=\gnu(c(s))$, $\gnu(p)$ being the  oriented unit normal to the tangent 2-space $T_pM$, and $\gu(s):=\gn(s)\times \gt(s)$, where $\times$ denotes the vector product in $\R^3$, is the unit conormal. Therefore, the tangent space $T_{c(s)}M$ is spanned by $(\gt(s),\gu(s))$.

The curvature vector $\Ddot{c}=\dot\gt(s)$ is orthogonal to
$\gt(s)$, and thus decomposes as
$$\Ddot{c}(s)=k_M(s)\,\gu(s)+k_n(s)\,\gn(s) $$
where $k_M:=\Ddot{c}\cdot\gu$ and $k_n:=\Ddot{c}\cdot\gn$ denote the {\em geodesic} and the {\em normal curvature} of $c$ respectively and $\cdot$ the scalar product in $\R^3$. The projection $k_M\gu$ of $\Ddot{c}$ onto the tangent bundle of $M$ is an intrinsic object, see \cite{MS2}, and for the sake of readability will be denoted by $\Ddot{c}^\top$. If $c$ is a geodesic on $M$, we have $k_M\equiv 0$,
 whereas in general
\begin{equation}
    |k_M|=|\Ddot{c}\cdot\gu|=\|\Ddot{c}^\top\|,\quad k_M=k_M(c)\,.
\end{equation}
\subsubsection{Total (geodesic) intrinsic curvature} 
In this section, we recall some properties concerning the total intrinsic curvature of smooth curves contained into surfaces. We thus let $M$ denote an immersed surface in $\R^3$. We assume $M$ smooth (at least of class $\mathcal{C}^3$), closed, and compact, our model case being $M=\mathbb{S}^2$, the standard unit sphere in $\R^3$. The {\em (intrinsic) rotation}
$\mathbf{k}_M^*(P)$ of a polygonal $P$ in $M$, where $M\subseteq\R^3$, is the sum of the turning angles between the consecutive geodesic arcs of $P$.

The following property has been proved in \cite{C}.

\begin{thm}\label{Tdens}{\bf (\cite[Thm.~3.4]{C})}
     Let $c$ be a regular curve in $M$ of class $C^2$, parameterized by arc-length. Then, for any sequence $\{P_h\}$ inscribed in $c$ such that $\mathrm{mesh} (P_h)\to 0$, one has
$$ \lim_{h\to\infty}\mathbf{k}_M^*(P_h)=\int_0^L|k_M(c(s))|\,ds\,. $$
\end{thm}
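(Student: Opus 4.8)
The plan is to localize the identity at each vertex of $P_h$, match the turning angle there with the integral of $k_M$ over a small subinterval, and then sum. Fix a partition $0=t_0<\cdots<t_h=L$ and write $\theta_i\in[0,\pi]$ for the turning angle of $P_h$ at $c(t_i)$, so that $\mathbf{k}_M^*(P_h)=\sum_{i=1}^{h-1}\theta_i$; throughout I use that $c\in C^2$ on the compact interval $[0,L]$ forces the arc-length gaps $t_{i+1}-t_i$ to be uniformly small as the partition is refined, which is the effective content of $\mathrm{mesh}\,P_h\to0$. First I would record directions intrinsically: fixing a parallel orthonormal frame along $c$ and letting $\phi(t)$ be the angle of the unit tangent $\dot c(t)$ relative to this frame, one has $\phi\in C^1$ with $\dot\phi(t)=k_M(c(t))$, hence $\int_a^b k_M\,dt=\phi(b)-\phi(a)$ on every subinterval. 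The task then reduces to showing that $\theta_i$ equals, up to a summable error, the modulus of a suitable increment of $\phi$.

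The core of the argument is a chord-direction estimate. I claim that the initial direction at $c(t_i)$ of the minimizing geodesic from $c(t_i)$ to $c(t_{i+1})$, compared with $\dot c(t_i)$, corresponds to the angle $\phi\big(\tfrac{t_i+t_{i+1}}2\big)$ up to an error $o(t_{i+1}-t_i)$, and symmetrically the arrival direction at $c(t_i)$ of the geodesic coming from $c(t_{i-1})$ corresponds to $\phi\big(\tfrac{t_{i-1}+t_i}2\big)$. To establish this I would pass to geodesic normal coordinates centred at $c(t_i)$ (for $M=\mathbb{S}^2$ a conformal chart such as stereographic projection serves the same purpose, as announced in the Introduction): there the metric is Euclidean up to $O(|x|^2)$, geodesics through the origin are straight rays, and $c$ has the expansion $c(t)=c(t_i)+\dot c(t_i)(t-t_i)+\tfrac12\ddot c(t_i)(t-t_i)^2+o\big((t-t_i)^2\big)$. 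The direction of the coordinate chord $\int_{t_i}^{t_{i+1}}\dot c\,dt$ is then, exactly as in the planar computation, the tangent direction at the midpoint up to second order, the curvature of $M$ entering only through the $O(|x|^2)$ metric correction, which is of lower order than $t_{i+1}-t_i$ and hence negligible after summation. \textbf{This local estimate is the main obstacle}: one must verify that the contribution of the ambient curvature is genuinely higher order and, above all, that the implied $o(\cdot)$ is \emph{uniform} in $i$.

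Granting the estimate, the turning angle is the angle between the outgoing direction and the continuation of the incoming one, so
\[
\theta_i=\Big|\phi\big(\tfrac{t_i+t_{i+1}}2\big)-\phi\big(\tfrac{t_{i-1}+t_i}2\big)\Big|+o(t_{i+1}-t_{i-1})=\Big|\int_{(t_{i-1}+t_i)/2}^{(t_i+t_{i+1})/2}k_M\,dt\Big|+o(t_{i+1}-t_{i-1}).
\]
The midpoint-shifted intervals tile $[0,L]$ except for two end gaps of length $O(\mathrm{mesh})$, whose contribution vanishes, so summing the main terms produces a Riemann-type sum. Two uniform estimates, both from $c\in C^2$ on the compact $[0,L]$ together with the bounded geometry of $M$, then finish the proof. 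First, replacing $\big|\int_I k_M\big|$ by $\int_I|k_M|$ costs at most $2\,|I|\,\omega(|I|)$ on each interval $I$, with $\omega$ the modulus of continuity of $k_M$, and these sum to $o(1)$. Second, since each $t_j$ lies in at most two of the increments one has $\sum_i(t_{i+1}-t_{i-1})\le 2L$, so the uniform errors sum to $o(1)$ as well. Therefore $\sum_i\theta_i\to\int_0^L|k_M|\,ds$, which is the claim.
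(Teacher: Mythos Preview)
The paper does not supply a proof of this theorem: it is quoted verbatim from \cite[Thm.~3.4]{C} and immediately used as a black box, so there is no in-paper argument to compare your proposal against.

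On its merits, your outline is sound and follows the standard route one would expect: introduce the signed angle $\phi$ of $\dot c$ in a parallel frame so that $\dot\phi=k_M$, approximate each geodesic-chord direction at a vertex by the value of $\phi$ at the midpoint of the corresponding parameter interval, and recover $\theta_i$ as a midpoint-to-midpoint increment of $\phi$. Your handling of the passage from $|\int_I k_M|$ to $\int_I |k_M|$ via the modulus of continuity, and the bookkeeping $\sum_i (t_{i+1}-t_{i-1})\le 2L$, are both correct. The one place that genuinely needs care is exactly the one you flag: the chord-direction estimate. Two points deserve to be made explicit there. First, comparing the geodesic's initial direction at $c(t_i)$ (measured in the parallel frame at $t_i$) with $\phi$ at the midpoint (measured in the parallel frame at the midpoint) involves an implicit parallel transport along $c$; the discrepancy between that transport and the Euclidean identification in normal coordinates is $O\big((t_{i+1}-t_i)^2\big)$ by the standard curvature expansion of parallel transport, which is what keeps the error summable. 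Second, uniformity in $i$ follows from the compactness of $[0,L]$ together with uniform bounds on the sectional curvature and injectivity radius of $M$ on a neighbourhood of $c([0,L])$, so you should state those bounds once at the outset. With those two remarks spelled out, your argument goes through.
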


\par As a consequence, for a curve $c$ in $M$, one is tempted to define its total intrinsic curvature as in the Euclidean case, i.e., as the supremum of the intrinsic rotation
$\mathbf{k}_M^*(P)$ computed among all the polygonals $P$ inscribed in $c$. However, as observed in \cite{C}, if $M$ has positive sectional curvature, as e.g. $M=\mathbb{S}^2$, the latter definition does not work. In fact, if $P,P'\ll c$, and $P'$ is obtained by adding a vertex in $c$ to the vertexes of $P$, then the monotonicity inequality $\mathbf{k}_{M}^*(P)\leq \mathbf{k}_{M}^*(P')$ holds true in general provided that $M$ has non-positive sectional curvature.
In fact, it relies on the fact that in this case the sum of the interior angles of a geodesic triangle of $M$ is not greater than $\pi$, see \cite[Lemma~4.1]{C}.

\begin{example}
    In the sphere $M=\mathbb{S}^2$, if we take as $c$ a parallel that is not a great circle and $P,P'$ are inscribed polygonals such that $P'$ is obtained by adding a vertex to $P$, the other inequality holds $$\mathbf{k}^*_{\mathbb{S}^2}(P)>\mathbf{k}^*_{\mathbb{S}^2}(P') \quad \mbox{ and }\quad\mathbf{k}^*_{\mathbb{S}^2}(P)>\int_c |k_{\mathbb{S}^2}(c)|\, ds.$$
\end{example}

\par Actually, the good definition turns out to be the one introduced by Alexandrov-Reshetnyak in \cite{alexandrov2012general} using the {\em modulus} $\mu_c(P)$ of a polygonal $P$ inscribed in $c$, that is the maximum of the geodesic diameter of the arcs of $c$ determined by two consecutive vertexes in $P$.

For $\epsilon>0$, we let
$$ \Sigma_\epsilon(c):=\{ P\ll c\mid \mu_c(P)<\epsilon\}\,. $$
 
\begin{defn}\label{Dcurv}
     The {\em total intrinsic curvature} of a curve $c$ in $M$ is
$$ \mathrm{TC}_M(c):=\lim_{\epsilon\to 0^+}\sup\{ \mathbf{k}_M^*(P)\mid P\in \Sigma_\epsilon(c)\}\,. $$ 
\end{defn}
 
\par Clearly, the above limit is equal to the infimum of $\sup\{ \mathbf{k}_M^*(P)\mid P\in \Sigma_\epsilon(c)\}$ as $\epsilon>0$. Moreover, arguing as in \cite[Prop.~2.1]{ML}, for a polygonal $P$ in $M$ we always have $$\mathrm{TC}_M(P)=\mathbf{k}_M^*(P).$$

Most importantly, making use of a result by Dekster \cite{D}, as a consequence of \cite[Prop.~2.4]{ML} one obtains:
 
\begin{prop}\label{Pappr} The total curvature $\mathrm{TC}_M(c)$ of any curve $c$ in $M$ is equal to the limit of the rotation $\mathbf{k}_M^*(P_h)$ of {\em any} sequence of polygonals $\{P_h\}$ inscribed in $c$ such that
$\mu_c(P_h)\to 0$.     
\end{prop}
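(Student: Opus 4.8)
The plan is to rewrite the limit in Definition~\ref{Dcurv} as a monotone envelope and then reduce the claim to two one-sided bounds. Setting $f(\epsilon):=\sup\{\mathbf{k}_M^*(P)\mid P\in\Sigma_\epsilon(c)\}$, the inclusion $\Sigma_{\epsilon'}(c)\subseteq\Sigma_\epsilon(c)$ for $\epsilon'\le\epsilon$ shows that $f$ is nondecreasing, so that $\mathrm{TC}_M(c)=\inf_{\epsilon>0}f(\epsilon)$. For a sequence $\{P_h\}\ll c$ with $\mu_c(P_h)\to0$ it then suffices to establish
$$\limsup_{h\to\infty}\mathbf{k}_M^*(P_h)\le\mathrm{TC}_M(c)\le\liminf_{h\to\infty}\mathbf{k}_M^*(P_h).$$
The upper inequality will be immediate: for fixed $\epsilon>0$ one has $P_h\in\Sigma_\epsilon(c)$ for all large $h$, hence $\mathbf{k}_M^*(P_h)\le f(\epsilon)$, and taking the $\limsup$ and then letting $\epsilon\to0^+$ gives the bound. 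This is the only point at which $\mu_c(P_h)\to0$ is used for the upper estimate, and it is what rules out replacing the modulus by the mesh.

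For the lower inequality I would argue by comparison through a common refinement. Given $\eta>0$, since $f(\epsilon_0)\ge\mathrm{TC}_M(c)$ for every $\epsilon_0$, I can pick $\epsilon_0>0$ and a near-optimal inscribed polygonal $Q\in\Sigma_{\epsilon_0}(c)$ with $\mathbf{k}_M^*(Q)\ge\mathrm{TC}_M(c)-\eta$; because the rotation depends continuously on the vertices and $c$ is differentiable almost everywhere, I may take all the finitely many vertices of $Q$ to be differentiability points of $c$. Let $R_h\ll c$ be the polygonal whose vertex set is the union of those of $P_h$ and of $Q$, so that $R_h$ refines both. Comparing $Q$ with $R_h$, the passage only subdivides arcs of geodesic diameter $<\epsilon_0$, so the region enclosed between $Q$ and $R_h$ is a thin band along $c$ of area $\lesssim\mathcal{L}(c)\,\epsilon_0$; since on $\mathbb{S}^2$ the angular excess of a geodesic triangle equals its area, the rotation can decrease by at most this area, giving $\mathbf{k}_M^*(R_h)\ge\mathbf{k}_M^*(Q)-\omega(\epsilon_0)$ with $\omega(\epsilon_0)\to0$ as $\epsilon_0\to0^+$. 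Comparing $P_h$ with $R_h$, the passage inserts only the fixed vertices of $Q$ into segments of $P_h$ of vanishing geodesic diameter, and here I would invoke the quantitative estimate of Dekster~\cite{D}, packaged in \cite[Prop.~2.4]{ML}, to conclude that the rotation changes by $o(1)$, i.e. $\mathbf{k}_M^*(P_h)\ge\mathbf{k}_M^*(R_h)-o(1)$ as $h\to\infty$. Chaining the two comparisons yields $\mathbf{k}_M^*(P_h)\ge\mathrm{TC}_M(c)-\eta-\omega(\epsilon_0)-o(1)$, and letting $h\to\infty$ and then $\epsilon_0,\eta\to0^+$ gives $\liminf_h\mathbf{k}_M^*(P_h)\ge\mathrm{TC}_M(c)$.

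The hard part will be the second comparison, and behind it the failure of monotonicity in positive curvature: as the example of a parallel and the discussion after Theorem~\ref{Tdens} show, inserting a vertex on $\mathbb{S}^2$ may either raise or lower the rotation, so the plain monotonicity available in the Euclidean and nonpositively curved cases is unavailable. The Gauss--Bonnet area bound controls only how much the rotation can drop under refinement; to control how much it can rise one must exploit that, for a polygonal of small modulus, the newly created turning angle at each inserted vertex and the perturbations of the two neighbouring angles are governed by geodesic triangles whose diameters tend to $0$, so that (using the differentiability of $c$ at the inserted vertices) their contributions vanish. Turning this into a bound that is uniform and independent of the refinement is precisely what Dekster's estimate supplies; once it is granted, the remaining steps are routine manipulations of the monotone envelope $f$.
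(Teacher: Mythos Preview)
The paper does not give an independent proof of this proposition: it simply records it as a consequence of \cite[Prop.~2.4]{ML}, together with Dekster's length/curvature estimate \cite{D}. Your proposal is therefore not really in competition with a proof in the paper; rather, you are sketching what such a proof might look like, and at the decisive step you invoke the very same external results. In that sense your proposal and the paper agree.

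A couple of comments on the scaffolding you add. The upper inequality via the monotone envelope $f(\epsilon)$ is clean and correct. For the lower inequality, your two-step refinement $Q\to R_h\leftarrow P_h$ is the right architecture, and the $P_h\to R_h$ step (finitely many insertions into arcs of vanishing diameter, controlled by Dekster/\cite{ML}) is exactly the point the paper defers to the literature. The $Q\to R_h$ step, however, is looser than you suggest: the phrase ``region enclosed between $Q$ and $R_h$'' is not well-defined for a general curve $c$ (no embeddedness is assumed, and the rotation here is the sum of \emph{unsigned} turning angles, so a direct Gauss--Bonnet identity does not apply). What one actually uses is that refining a single geodesic edge of $Q$ by a vertex on $c$ replaces one triangle inequality among turning angles by two, and the defect is bounded by the area of a geodesic triangle of diameter at most $\epsilon_0$; summing these local corrections gives the $\omega(\epsilon_0)$ you want, but the bookkeeping (and the fact that the number of inserted vertices grows with $h$) needs the uniform local estimates that are precisely the content of \cite[Prop.~2.4]{ML}. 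So in the end both comparisons lean on the cited result, and your Gauss--Bonnet heuristic is motivation rather than an independent argument. Since the paper itself is content to cite \cite{ML}, this is not a gap relative to the paper, only a place where your sketch should not be mistaken for a self-contained proof.
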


\par Proposition~\ref{Pappr} fills in the gap given by the lack of monotonicity, yielding to the conclusion that
Definition~\ref{Dcurv} involves a control on the modulus and not on the mesh, at least when the sectional curvature of $M$ fails to be non-negative.
 
\par As a consequence, by Theorem~\ref{Tdens} one infers that for smooth curves $c$ in $M$ one has $$\mathrm{TC}_M(c)=\int_c|k_M(c)|\,ds.$$
 
By \cite[Cor.~3.6]{C}, for piecewise smooth curves $c$ in $M$ one similarly obtains that
\begin{equation}
    \label{TCMsmooth} \mathrm{TC}_M(c)=\int_0^L|k_M(c)|\,ds+\sum_i|\theta_i|\,. 
\end{equation}
 
In this formula, the integral is computed separately outside the corner points of $c$, where the geodesic curvature $k_M$ is well-defined,
and the second addendum denotes the finite sum of the absolute value of the oriented turning angles $\theta_i$ between the incoming and outcoming unit tangent vectors at each corner point of $c$.
Therefore, for piecewise smooth curves in $M\subseteq\R^3$ we can rewrite formula \eqref{TCMsmooth} as
 
\begin{equation}
    \label{TCMsmoothV} \mathrm{TC}_M(c)= \int_0^L\|\Ddot{c}^\top(s)\|\,ds +
\sum_{s\in J_\gt}d_{\mathbb{S}^2}(\gt(s+),\gt(s-))\,. 
\end{equation}
where $J_\gt$ is the jump set of the $\BV$ function $\gt$ and $\ddot c^\top$ is the tangential part of the second derivative of $c$.

Equality \eqref{TCMsmooth} shows that a rectifiable curve with finite total curvature may have corners in the plastic case. In the elastic case, this is not true.
\subsection{Functional setting}
We deal with Sobolev maps in the two-sphere $\mathbb{S}^2$.
For any $p>1$ and for $k=1,2$, we define the Sobolev class 
$$
W^{k,p}(I,\mathbb{S}^2)\coloneqq\left\{ u\in W^{k,p}(I,\R^3)\,|\, \|u\|=1 \mbox{ a.e. on }I
\right\}\,.
$$ 
We will work with curves $u$ parametrized by arc length. If $u:I\rightarrow\mathbb{S}^2\subseteq\R^3$ is $\mathcal{C}^2$ and $\|\dot u\|=1$, the curvature $k_{\mathbb{S}^2}(u)$ coincides with the projection on the tangent space in $u(t)$ of the second derivative $\ddot u$, namely 
\begin{equation}\label{Spher.curv}
    |k_{\mathbb{S}^2} (u)|=\|\ddot u^\top\|,
\end{equation}
where 
$$\ddot u^\top=\ddot u+u\ .$$ 
Therefore, asking for the covariant derivative $\nabla_{\dot u}\dot u$ to be in $L^p$ is equivalent to the curve being in $W^{2,p}(I,\mathbb{S}^2)$.

We observe that, by standard Sobolev embeddings $W^{2,p}(I,\mathbb{S}^2)\hookrightarrow \mathcal{C}^1(I,\mathbb{R}^3)$, a curve $u\in W^{2,p}(I,\mathbb{S}^2)$ is at least $\mathcal{C}^1$ smooth.

\section{$p$-curvature functional}\label{Sec:pcurv}

\subsection{$p$-rotation of polygonals}
In this section, we deal with the definition of $p$-rotation for polygonals and the $p$-curvature functional for rectifiable curves in the sphere $\mathbb{S}^2$. We first recall how they can be defined in the Euclidean case and then, we perform an analogous construction in the spherical setting.
\subsubsection{The Euclidean case}
From a given polygonal $P$ inscribed in $c:I\rightarrow \R^n$, the first two authors in \cite{MS1} construct a curve $\gamma(P)$ that is piecewise smooth and insisting on $P$. The idea is to redistribute the \emph{curvature measure} concentrated in $\{t_1,\ldots,t_{h-1}\}$ along pieces of smooth curves with constant curvature, i.e., pieces of circles.
\begin{example}
    Let $P$ be a polygonal with a vertex in $(0,0)\in \R^2$ and for $t\in[0,\epsilon]$, the two segments $(t,0)$ and $(\cos(\alpha)\,t,\sin(\alpha)\,t)$, where $\alpha\in(0,\pi)$ and $t\in[0,\epsilon]$.
    For $\epsilon>0$, denoting $s=t/\epsilon\tan(\alpha/2)$, the curve $$\gamma(s)=(\epsilon,\epsilon\tan(\alpha/2))+\epsilon\tan(\alpha/2)\left(\cos (s),\sin (s)\right)\,,$$ 
 where $s\in[\pi/2+\alpha,\,3\pi/2]$, coincides at order $1$ with the end points of the two segments. If $\theta\coloneqq\pi-\alpha$ is the turning angle, then 
 $$\int_\gamma |k_{\R^2}(\gamma)|\,ds=\theta, \quad \int_\gamma |k_{\R^2}(\gamma)|^p\,ds=\epsilon^{1-p}\theta\tan^{p-1}(\theta/2).$$
 
 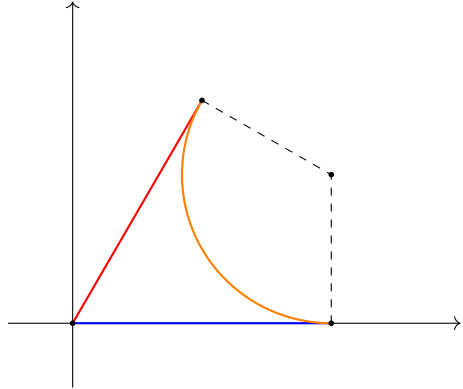
\begin{figure}[H]
    \centering
   \begin{tikzpicture}[scale=1.7]
  \def\eps{2}
  \def\alpha{60} 
  \def\alpharad{\fpeval{\alpha*pi/180}} 
  \def\tanAlpha{\fpeval{tan(\alpharad)}}
  \def\tanAlphaHalf{\fpeval{tan(\alpharad/2)}}
  \def\cosAlpha{\fpeval{cos(\alpharad)}}
  \def\gammaTwoX{\fpeval{\eps*\cosAlpha}}
  \def\gammaTwoY{\fpeval{\gammaTwoX*\tanAlpha}}
  \def\centerY{\fpeval{\eps*\tanAlphaHalf}}
  \def\radius{\fpeval{\eps*\tanAlphaHalf}}
  \def\startAngle{\fpeval{90+\alpha}} 
  \def\endAngle{270} 

  \draw[->] (-0.5,0) -- (3,0);
  \draw[->] (0,-0.5) -- (0,2.5);

  \draw[thick,blue] (0,0) -- (\eps,0);

  \draw[thick,red] (0,0) -- ({\gammaTwoX},{\gammaTwoY});

  \draw[dashed] ({\gammaTwoX},{\gammaTwoY}) -- (\eps,{\centerY});

  \draw[dashed] (\eps,0) -- (\eps,{\centerY});

  \draw[thick,orange,domain=\startAngle:\endAngle,samples=100,variable=\u]
    plot ({\eps + \radius*cos(\u)}, {\centerY + \radius*sin(\u)});

  \filldraw (0,0) circle (0.5pt);
  \filldraw (\eps,0) circle (0.5pt);
  \filldraw ({\gammaTwoX},{\gammaTwoY}) circle (0.5pt);
  \filldraw (\eps,{\centerY}) circle (0.5pt);

\end{tikzpicture}
    \caption{The curvature $k_{\R^2}(\gamma)$ is the inverse of the radius $R=\tfrac{\epsilon}{\tan{(\theta/2)}}$ of the orange curve $\gamma$.}
    
\end{figure}
\end{example}
Then, they define the $p$-rotation of  an equilateral polygonal $P$ inscribed in a curve $c:I\rightarrow\R^n$ with  edge length $\ell$, vertexes $P(t_i)$ and turning angles $\theta_i$ for $i=1,\ldots,h-1$ as $$\mathbf{k}_p(P)\coloneqq\sum_{i=1}^{h-1}(\ell/2)^{1-p}\theta_i\tan^{p-1}(\theta_i/2).$$
It turns out that $\mathbf{k}_1(P)=\mathbf{k}_{\R^n}^*(P)=\mathrm{TC}_{\R^n}(P),$ so this definition of $p$-rotation includes the definition of \emph{total curvature} for $p=1$.

\subsubsection{The spherical case}
From now on, we consider the case of $M=\mathbb{S}^2$.

We define a $p$-curvature functional on rectifiable spherical curves that gives a notion of \emph{total} $p$-\emph{curvature}. 
 
Given a rectifiable curve $c:I:\rightarrow\mathbb{S}^2$ parametrized by arc length and an inscribed polygonal $P\ll c$, using Appendix~\ref{AA}, we define the curve $\gamma(P)$ as follows.
\begin{itemize}
\item Around every vertex $P(t_i)$, we look at the length of the edges of $P$ that insist to the corner. We start by looking at $P(t_1)$, let $\bar \ell_0,\bar \ell_1$ be the length of $P_{|_{[t_0,t_1]}},P_{|_{[t_1,t_2]}}$ respectively. Let $\ell_1=\min\{\bar \ell_0,\bar \ell_1\}$ and perform the construction given in Appendix~\ref{AA} on $\ell_1/2$;
    \item repeat the construction on the next vertex $P(t_2)$, and so on, using as $\ell_i=\min\{\bar \ell_{i-1},\bar\ell_i\}$, where $\bar\ell_i$ is the length of the polygonal restricted to $[t_{i},t_{i+1}]$. ;
    \item take as $\gamma(P):[0,L]\rightarrow\mathbb{S}^2$ the obtained curve where near to the corners we glue the previous construction with the polygonal.
    
    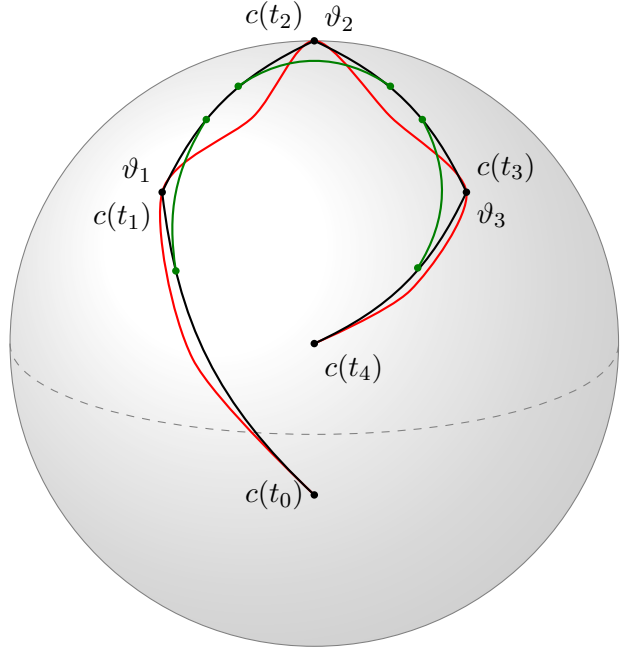
\begin{figure}[h]
\centering
\begin{tikzpicture}[scale=2]
    \shade[ball color=white!10, opacity=0.3] (0,0) circle (2cm);

    \draw[gray] (0,0) circle (2cm);
    \draw[dashed,gray] (-2,0) arc (180:360:2 and 0.6);
 
    \coordinate (N) at (0,2);                 
    \coordinate (A) at (1,1);         
    \coordinate (B) at (-1,1);        
    \coordinate (C) at (0,0);             
    \coordinate (D) at (0,-1);            
    \coordinate (E) at (-0.8,-0.1);
    \coordinate (F) at (-0.4,1.5); 
      \coordinate (G) at (.5,1.5);   
      \coordinate (H) at (0.63,.35);  
\draw[thick, red, smooth] plot coordinates {(D) (E) (B) (F) (N) (G)(A) (H) (C)};

    \coordinate (MNA) at (.5,1.7);
    \coordinate (MAB) at (-0.5,1.7);
     \coordinate (MNBD) at (-0.71,1.48);
    \coordinate (MBD) at (-0.91,0.48);
    \coordinate (MNAC) at (0.71,1.48);
     \coordinate (MCE) at (0.68,0.5);

    \draw[black, thick] (N) to[bend left=20] (A);  
    \draw[black, thick] (A) to[bend left=20] (C);
    \draw[black, thick] (N) to[bend right=20] (B);
    \draw[black, thick] (B) to[bend right=20] (D);

    
    \draw[green!50!black, thick] (MNA) to[bend right=35] (MAB);
    \draw[green!50!black, thick] (MNBD) to[bend right=20] (MBD);
    \draw[green!50!black, thick] (MNAC) to[bend left=30] (MCE);

    \foreach \p in {MNA,MAB,MBD,MNBD,MNAC, MCE} {
        \fill[green!50!black] (\p) circle (0.7pt);    }

    \foreach \p in {N,A,B,C,D} {
        \fill[black] (\p) circle (0.7pt);
    }

    \node[above left] at (N) {$c(t_2)$};
    \node[above right] at (N) {$\theta_2$};
    \node[below left] at (B) {$c(t_1)$};
    \node[above left] at (B) {$\theta_1$};
    \node[left] at (D) {$c(t_0)$};
    \node[above right] at (A) {$c(t_3)$};
    \node[below right] at (A) {$\theta_3$};
    \node[below right] at (C) {$c(t_4)$};

\end{tikzpicture}
\caption{Construction applied to a geodesic polygonal with $h=4$. In red the curve $c(t)$, in black the polygonal with vertex $\{c(t_1),c(t_2),c(t_3)\}$, respectively turning angles $\theta_1,\theta_2,\theta_3$ and in green the curve $\gamma(P)$.}
\end{figure}

\end{itemize}

\begin{defn}
    Given a spherical polygonal $P$, we define the $p$-rotation of $P$ as $$\mathbf{k}_p(P)\coloneqq\int_{\gamma(P)}|k_{\mathbb{S}^2}(\gamma(P))|^p\,dt.$$
\end{defn}
The curve $\gamma(P)$ that we obtained is smooth except for $2h-2$ points where it is  $\mathcal{C}^1$. Moreover by \cite{AceMuc16}, it turns out that $$\mathbf{k}_1(P)\coloneqq\int_{\gamma(P)}|k_{\mathbb{S}^2}(\gamma(P))|\,dt=\int_{0}^{\mathcal{L}(\gamma(P))}\norm{\Ddot{c_P}^\top(s)}\,ds,$$ where $c_P$ is the arc length parametrization of the curve $\gamma(P).$ 

Here, it is no longer true, as in the Euclidean case, that $\mathbf{k}_1(P)=\mathrm{TC}_{\mathbb{S}^2}(P)$, but for sequences $\{P_h\}\ll c$ with $\mu_c(P_h)\rightarrow 0$, we have $\mathbf{k}_1(P_h)\sim\mathrm{TC}_{\mathbb{S}^2}(c)$.

We define the $p$-curvature of the curve $c$ as the Lebesgue–Serrin relaxed functional, of $\quad$ $p$-rotations of polygonals inscribed in the curve.  
\begin{defn}\label{F_p}
    The $p$-\emph{curvature} functional of a rectifiable curve $c$ is defined as $$ \mathcal{F}_p(c)\coloneqq\inf_\epsilon \left\{\liminf_{h\rightarrow \infty}{\mathbf{k}_p(P_h)}:\{P_h\}\subseteq\Sigma_\epsilon(c)\right\},$$
\end{defn}

It turns out that for an equilateral polygonal $P$ with $h$ edges of length $\ell$ and turning angles $\theta_1,\ldots,\theta_{h-1}$, the $p$-rotation $\mathbf{k}_p(P)$ is given by \begin{equation}
\mathbf{k}_p(P)\coloneqq\sum_{i=1}^{h-1}2\arctan\left(\frac{\Psi(\theta,\ell)}{\cos(\theta_i/2)\cos(\ell/2)}\right)\cdot \frac 1{\Psi(\theta,\ell)}\cdot \frac{\sin^p(\theta_i/2)}{(\cos(\theta_i/2)\sin(\ell/2))^{p-1}}\,,    
\end{equation} where $\Psi(\theta,\ell)\coloneqq\sqrt{\sin^2(\ell/2)+\sin^2(\theta/2)\cos^2(\ell/2)}$. For an explicit computation, see  Appendix~\ref{AA}.

\begin{oss}

We observe that the spherical $p$-rotation of a polygonal $P$, for $\theta_i$ and $\ell$ small, is almost identical to the Euclidean $p$-rotation, indeed $$\mathbf{k}_p(P)\sim2\sum_{i=1}^{h-1}\tan^p(\theta_i/2)\sin^{1-p}(\ell/2)\sim\sum_{i=1}^{h-1}\theta_i^p\ell^{1-p}.$$
This will be quantitatively used to prove Theorem~\ref{MainThm} through the local conformality of the sphere to the plane explored in Proposition~\ref{PropPLA}. 
\end{oss}

\section{Technical lemmas}
The results of the following Lemma are well known. However, to the best of our knowledge, we are not able to give a specific reference for the  formulas that we need, even for smooth curves.
The proof will be an easy application of the rules of differential calculus.
\begin{lem}[Geodesic curvature via conformal maps]\label{LemmaComparison}

    Let $f:(M,g)\rightarrow (N,g')$ be a conformal map between two-dimensional oriented Riemannian manifolds such that $g'=e^{\lambda^2 }g$ and let the curve $\gamma:[0,L]\rightarrow M$ be  $\mathcal{C}^2$ and $\gu\in T_{\gamma}M$ the conormal. \\ Suppose that the image curve $c\coloneqq f\circ\gamma:[0,L]\rightarrow N$ is parametrized by arc length. Then if $k_M$ and $k_{N}$ are the geodesic curvature of $\gamma$ and $c$ respectively, for every $t\in (0,L)$ there holds \begin{equation}\label{GCconformal}
        k_N(t)=e^{-\lambda(\gamma(t))}\Big(k_{M}(t)-e^{2\lambda(\gamma(t))}\partial_\gu\lambda(\gamma(t))\Big).
    \end{equation} 
    Moreover, the arc length element $s(t)$ of the curve $\gamma$ is given by \begin{equation}
        s(t)=\int_0^t e^{-\lambda(\gamma(\tau))}\, d\tau.
    \end{equation}
\end{lem}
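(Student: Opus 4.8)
The statement splits into two independent claims, and I would dispatch the arc-length element first, since it is elementary and fixes the relation between the parameter $t$ and the intrinsic geometry of $\gamma$. Because $c=f\circ\gamma$ is parametrized by arc length in $(N,g')$, we have $|\dot c|_{g'}=1$. Writing $\dot c=df(\dot\gamma)$ and reading the conformality relation as the scaling of lengths by $e^{\lambda}$ (so that $|df(v)|_{g'}=e^{\lambda}|v|_{g}$ for every tangent vector $v$, consistent with the $e^{-\lambda}$ and $e^{2\lambda}$ appearing in \eqref{GCconformal}), we obtain $|\dot\gamma|_{g}=e^{-\lambda(\gamma(t))}$. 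Integrating the $g$-speed gives at once $s(t)=\int_0^t e^{-\lambda(\gamma(\tau))}\,d\tau$.

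For the curvature identity I would first reduce it to a \emph{conformal change of metric on the single manifold} $M$. Since $f$ is a conformal diffeomorphism onto its image, it is an isometry from $(M,\tilde g)$ to $(N,g')$ with $\tilde g:=f^{*}g'=e^{2\lambda}g$; consequently the geodesic curvature $k_N$ of $c$ coincides with the $\tilde g$-geodesic curvature of the \emph{same} curve $\gamma$. The problem thus becomes comparing the $g$- and $\tilde g$-geodesic curvatures of $\gamma$ under $\tilde g=e^{2\lambda}g$, for which I would invoke the transformation law of the Levi-Civita connections,
\[
\tilde\nabla_X Y=\nabla_X Y+d\lambda(X)\,Y+d\lambda(Y)\,X-g(X,Y)\,\mathrm{grad}_g\lambda ,
\]
which is precisely the rule of differential calculus the statement alludes to.

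The computation then runs as follows. Let $(\gt,\gu)$ be the $g$-orthonormal frame along $\gamma$, with $\gt$ the $g$-unit tangent and $\gu$ the conormal, so that $\nabla_{\gt}\gt=k_M\,\gu$ by definition of $k_M$. The $\tilde g$-unit tangent is $e^{-\lambda}\gt$. I would expand $\tilde\nabla_{e^{-\lambda}\gt}(e^{-\lambda}\gt)$: differentiating the scalar factor $e^{-\lambda}$ along $\gt$ produces a purely tangential term, while the connection term gives $\tilde\nabla_{\gt}\gt=k_M\,\gu+2\,(\partial_{\gt}\lambda)\,\gt-\mathrm{grad}_g\lambda$. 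Decomposing $\mathrm{grad}_g\lambda=(\partial_{\gt}\lambda)\,\gt+(\partial_{\gu}\lambda)\,\gu$ in the frame, all tangential contributions cancel and only a normal component survives, proportional to $k_M$ minus a multiple of $\partial_{\gu}\lambda$. Comparing this with $k_N$ times the $\tilde g$-unit conormal yields \eqref{GCconformal}.

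The one genuinely delicate point — the step I would verify most carefully — is the \emph{bookkeeping of the conformal factors together with the normalization convention for the conormal} $\gu$: the power of $e^{\lambda}$ multiplying $\partial_{\gu}\lambda$ in the final identity is dictated entirely by the metric in which $\gu$ is taken to be of unit length, so I would fix that convention at the outset and track it through every substitution until it reproduces the stated coefficient. As a cross-check, and an alternative route matching the authors' ``differential calculus'' phrasing, one can pass to isothermal coordinates in which $g=e^{2\rho}\delta$ is conformal to the flat metric $\delta$, apply the classical planar formula $k=e^{-\rho}(\kappa_{\mathrm{eucl}}+\partial_{n}\rho)$ to both $g$ and $\tilde g$, and subtract; the Euclidean curvature $\kappa_{\mathrm{eucl}}$ cancels and the same relation emerges.
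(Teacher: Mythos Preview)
Your proposal is correct and follows essentially the same route as the paper: both arguments rest on the standard transformation law for the Levi--Civita connection under a conformal change $\tilde g=e^{2\lambda}g$ (the paper quotes it from \cite{doCarmo1976} and carries it through $df$ on $N$, whereas you first pull back to $M$ and work with two metrics on one manifold, a purely cosmetic repackaging). Your isothermal-coordinates cross-check is an extra sanity check not present in the paper, but the main line of argument coincides.
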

\begin{proof}
    By definition, the geodesic curvature of the curve $c$ in $N$ is defined as $$k_N=g'(\nabla'_{\dot{c}}\dot{c},\gu'),$$ where  $\gu'$ is the conormal in $N$ such that $\{\dot{c}(t),\gu'(t)\}$ is  an oriented basis of $T_{c(t)}N$ and $\nabla'$ is the Levi-Civita connection on $N$.
    Now, via conformal changes, we have $$e^\lambda\norm{\dot{\gamma}}_{g}=\norm{\dot{c}}_{g'}=1,\quad \quad\norm{\dot{\gamma}}_{g}=e^{-\lambda}$$ 
    and if $\gu(t)$ is the conormal vector in $M$, then $(df)_{\gamma}\gu$ is orthogonal to $\dot{c}$ and $$\norm{(df)_{\gamma}\gu}_{g'}=e^\lambda .$$ We use $\gu'= e^{-\lambda}(df)_{\gamma}\gu $ 
    and by \cite{doCarmo1976} there holds \begin{equation}\label{LC1}
        \nabla'_{(df)_\gamma\dot{\gamma}}(df)_\gamma\dot{\gamma}=(df)_\gamma\left(\nabla_{\dot{\gamma}}\dot{\gamma}\right)+2\dot{\gamma}(\lambda)(df)_\gamma\dot{\gamma}-g(\dot{\gamma},\dot{\gamma})(df)_\gamma(\nabla\lambda),
    \end{equation} and $$\begin{aligned}
       \nabla_{e^{\lambda}\dot{\gamma}}(e^{\lambda}\dot{\gamma})&=e^{\lambda}\left(\nabla_{\dot{\gamma}}(e^{\lambda}\dot{\gamma})\right)=e^{\lambda}\left(e^{\lambda}\nabla_{\dot{\gamma}}\dot{\gamma}+e^{\lambda}\dot{\gamma}(\lambda)\dot{\gamma}\right),
    \end{aligned}$$
    so $$\begin{aligned}
        \nabla_{\dot{\gamma}}\dot{\gamma}=e^{-2\lambda}  \nabla_{e^{\lambda}\dot{\gamma}}(e^{\lambda}\dot{\gamma})-\dot{\gamma}(\lambda)\dot{\gamma}.
    \end{aligned}$$
    Using \eqref{LC1} we get $$\nabla'_{\dot{c}}\dot{c}=(df)_\gamma\left(e^{-2\lambda}  \nabla_{e^{\lambda}\dot{\gamma}}(e^{\lambda}\dot{\gamma})\right)+\dot{\gamma}(\lambda)(df)_\gamma\dot{\gamma}-g(\dot{\gamma},\dot{\gamma})(df)_\gamma(\nabla\lambda) .$$ Finally, we compute the geodesic curvature $k_N$ on $N$ as $$\begin{aligned}
        k_N&=g'(\nabla'_{\dot{c}}\dot{c},\gu')=g'(\nabla'_{\dot{c}}\dot{c},e^{-\lambda}(df)_{\gamma}\gu)\\
&=g'\Big((df)_\gamma\left(e^{-2\lambda}  \nabla_{e^{\lambda}\dot{\gamma}}(e^{\lambda}\dot{\gamma})\right)+\dot{\gamma}(\lambda)(df)_\gamma\dot{\gamma}-g(\dot{\gamma},\dot{\gamma})(df)_\gamma(\nabla\lambda),e^{-\lambda}(df)_{\gamma}\gu\Big)\\
&=e^{-3\lambda}g'\Big((df)_\gamma\nabla_{e^{\lambda}\dot{\gamma}}(e^{\lambda}\dot{\gamma}),(df)_{\gamma}\gu\Big)+e^{-\lambda}\dot{\gamma}(\lambda)g'\Big((df)_\gamma\dot{\gamma},(df)_{\gamma}\gu\Big) \\
& \quad -e^{-\lambda}g'\Big((df)_\gamma(\nabla\lambda),(df)_{\gamma}\gu\Big)\Big)\\
&=e^{-\lambda}g\left(\nabla_{e^{\lambda}\dot{\gamma}}(e^{\lambda}\dot{\gamma}),\gu\right)+e^{\lambda}\dot{\gamma}(\lambda)g\left(\dot{\gamma},\gu\right)-e^{\lambda}g\left(\nabla\lambda,\gu\right)\Big)\\
&=e^{-\lambda}(k_{M}-e^{2\lambda}\partial_\gu\lambda)\,,
    \end{aligned}$$
where we have used that $g(\dot{\gamma},\gu)=0$.
\end{proof}
Our purpose is to use Lemma~\ref{LemmaComparison} locally for a curve  $c:I_\delta\coloneqq[-\delta,\delta]\rightarrow \mathbb{S}_*^2$, parameterized by arc length, pushed forward from a curve $\gamma:I_\delta\rightarrow\R^2$ by a conformal map $f$, where $\quad$ $\mathbb{S}^2_*\coloneqq\mathbb{S}^2\setminus{\{(0,0,-1)\}}$ . More precisely, suppose that $c(0)=(0,0,1)$ and define $\gamma\coloneqq f^{-1}\circ c $, where the map $f$ is given by $$\begin{aligned}&\qquad f:\R^2\longrightarrow \mathbb{S}^2_*
\\
&(x,y)\mapsto \left(\frac{4 x}{4 + x^2 + y^2}, \frac{4 y}{4 + x^2 + y^2}, \frac{8}{4 + x^2 + y^2}-1\right).
\end{aligned} $$
The map $f$ is conformal with conformal factor $$\exp{\lambda(x,y)}=\dfrac{4}{4+x^2+y^2}.$$
We need to apply Lemma~\ref{LemmaComparison} to curves $c$ in the Sobolev space $W^{2,p}(I_\delta,\mathbb{S}^2)$ and $f:\R^2\rightarrow\mathbb{S}^2 _*$. In order to do this, if $\phi$ is a smooth curve, there holds  
$$ \begin{aligned}
&
\qquad\int_{\phi}k_{\mathbb{S}^2}^p(\phi(t))\, dt= \\  &\int_{-\delta}^\delta\left(\dfrac{4+x(t)^2+y(t)^2}{4}\Big(\norm{k_{\R^2}(\gamma)}+\dfrac{C}{(4+x(t)^2+y(t)^2)^2}\gamma(t)\cdot \gu(t)\Big)\right)^p dt,
\end{aligned}  $$ 
where $\gamma=f^{-1}\circ\phi =(x(t),y(t))$, $\gu $ is the conormal, $C$ is a positive constant independent from the curve and $(\cdot)^\top$ stands for the projection of the Euclidean derivative onto the tangent space $T_\gamma\mathbb{S}^2$. By the Dominated Convergence Theorem,  the same formula holds for weak derivatives in $ W^{2,p}$.

\begin{prop}[Arc-length parametrization]\label{PropPLA}
    Fix $c:I_\delta=[-\delta,\delta]\rightarrow \mathbb{S}^2_*$ a curve parametrized by arc-length such that $c(0)=(0,0,1)$.
    Then, the arc-length parametrization $\Gamma(s)=(\tilde{x}(s),\tilde{y}(s))$ of the curve $\gamma(t)\coloneqq f^{-1}\circ c(t)=(x(t),y(t))$ satisfies \begin{equation}\label{cu}
        \int_{I_\delta} k_{\mathbb{S}^2}(c(t))\, dt=\int_0^{\mathcal{L}(\gamma)}\Big(k_{\R^2}(\Gamma(s))+\dfrac{C}{(4+\tilde{x}(s)^2+\tilde{y}(s)^2)^3}\Gamma(s)\cdot \tilde{\gu}(s)\Big)\, ds,
\end{equation}
    where $C$ is a constant independent from the curve and $\gu(t)$ is the conormal vector in $T_{\gamma(t)}\mathbb{S}^2$ orthogonal to $\dot \gamma(t)$, $$s=s(t)\Longleftrightarrow t=\phi(s),$$ and $$\Gamma(s)\coloneqq\gamma(\phi(s))\qquad\Tilde{\gu}(s)\coloneqq \gu(\phi(s)).$$
    Moreover, if $c\in W^{2,p}(I_\delta,\mathbb{S}^2)$, then there exists a non-negative constant $C_\delta\coloneqq C(\delta,f)$ and a positive constant $\tilde{C}_\delta$ close to $1$, independent from the curve, such that \begin{equation}\label{cuu}
         \int_{I_\delta} |k_{\mathbb{S}^2}(c(t))|^p\, dt\geq \int_0^{\mathcal{L}(\gamma)}\tilde{C}_\delta\Big||k_{\R^2}(\Gamma(s))|-C_\delta\delta\Big|^p\, ds \,.
    \end{equation}
\end{prop}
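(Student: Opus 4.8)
The plan is to derive both statements from the pointwise conformal curvature formula \eqref{GCconformal} of Lemma~\ref{LemmaComparison} composed with a single change of variables: the signed identity \eqref{cu} comes out as an exact equality, whereas the $L^p$ bound \eqref{cuu} needs in addition two elementary estimates valid on the localised domain. Throughout I would apply Lemma~\ref{LemmaComparison} with $M=\R^2$, $N=\mathbb{S}^2_*$ and $f$ the stated conformal map, along the planar curve $\gamma=f^{-1}\circ c$, so that $k_{\mathbb{S}^2}(t)=e^{-\lambda}\bigl(k_{\R^2}(t)-e^{2\lambda}\partial_\gu\lambda\bigr)$ and, since $c$ is unit speed, $ds=e^{-\lambda}\,dt$, i.e. $dt=e^{\lambda}\,ds$.

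First I would prove \eqref{cu}. As the geodesic curvature and the conormal are parametrization-independent, I may evaluate every quantity at the point $\gamma(t)=\Gamma(s(t))$; substituting $dt=e^{\lambda}\,ds$ then cancels the two exponential prefactors and gives $\int_{I_\delta}k_{\mathbb{S}^2}\,dt=\int_0^{\mathcal L(\gamma)}\bigl(k_{\R^2}(\Gamma)-e^{2\lambda}\partial_{\tilde\gu}\lambda\bigr)\,ds$. It then remains to identify the correction term: from $\lambda=\log 4-\log(4+x^2+y^2)$ one gets $\nabla\lambda=-2(x,y)/(4+x^2+y^2)$ and $e^{2\lambda}=16/(4+x^2+y^2)^2$, so that $-e^{2\lambda}\partial_{\tilde\gu}\lambda=32\,\Gamma\cdot\tilde\gu/(4+\tilde x^2+\tilde y^2)^3$, which is exactly the term in \eqref{cu} with $C=32$. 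By the dominated-convergence remark preceding the statement, the identity persists for weak derivatives, hence for $c\in W^{2,p}$.

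For \eqref{cuu} I would run the same substitution on $|k_{\mathbb{S}^2}|^p$, obtaining the exact identity $\int_{I_\delta}|k_{\mathbb{S}^2}|^p\,dt=\int_0^{\mathcal L(\gamma)}e^{(1-p)\lambda}\,|k_{\R^2}(\Gamma)+E|^p\,ds$, where $E(s):=32\,\Gamma\cdot\tilde\gu/(4+\tilde x^2+\tilde y^2)^3$ is the scalar correction just computed. Two localisation estimates then control the two new factors. Since $c$ is unit speed with $c(0)$ the north pole, every $c(t)$ lies within geodesic distance $\delta$ of it, so $\gamma$ is confined to the disc $\{\tilde x^2+\tilde y^2\le R_\delta^2\}$ with $R_\delta=2\tan(\delta/2)$; on this disc $\lambda\le 0$ and $1-p<0$ force $e^{(1-p)\lambda}\ge 1$, so I set $\tilde C_\delta:=\min e^{(1-p)\lambda}$, a number depending only on $\delta,p,f$ and close to $1$. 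For the correction, $|\tilde\gu|=1$ and $(4+\tilde x^2+\tilde y^2)^3\ge 64$ yield $|E(s)|\le 32R_\delta/64=\tan(\delta/2)\le C_\delta\delta$ with $C_\delta=C(\delta,f)$ bounded and independent of the curve.

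Finally I would invoke the reverse triangle inequality $|k_{\R^2}(\Gamma)+E|\ge|k_{\R^2}(\Gamma)|-C_\delta\delta$ and raise to the power $p$: on the set where $|k_{\R^2}(\Gamma)|\ge C_\delta\delta$ both sides are non-negative and $|k_{\R^2}(\Gamma)+E|^p\ge\bigl||k_{\R^2}(\Gamma)|-C_\delta\delta\bigr|^p$, which combined with $e^{(1-p)\lambda}\ge\tilde C_\delta$ gives \eqref{cuu} after integration. The main obstacle — and the only genuinely delicate point — is the complementary region $\{|k_{\R^2}(\Gamma)|<C_\delta\delta\}$: there the reverse-triangle bound is vacuous, so the right-hand integrand must be read as the positive part $\bigl(|k_{\R^2}(\Gamma)|-C_\delta\delta\bigr)_+^p$, which vanishes on that region and makes the pointwise inequality trivially true there. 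With this reading the estimate carries information precisely where the planar curvature exceeds the $O(\delta)$ conformal error, which is exactly what is needed downstream; one must only keep track that $R_\delta$, $\tilde C_\delta$ and $C_\delta$ are functions of $f$ and $\delta$ alone, so that the bound is uniform over all competitor curves.
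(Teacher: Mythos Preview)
Your argument is essentially the paper's own: apply the conformal curvature identity of Lemma~\ref{LemmaComparison}, pass to the planar arc-length $s$ via $dt=e^{\lambda}\,ds$, and then bound the exponential prefactor and the correction term on the localised disc using the reverse triangle inequality. You are in fact more careful than the paper on two points --- you obtain the correct exponent $e^{(1-p)\lambda}$ (the paper writes $e^{(p-1)\lambda}$, a harmless slip since both factors are close to $1$ on the disc) and you explicitly flag that the final pointwise bound only holds with the positive-part reading $\bigl(|k_{\R^2}|-C_\delta\delta\bigr)_+^p$ on the set $\{|k_{\R^2}|<C_\delta\delta\}$, a step the paper's chain of inequalities takes for granted but which is precisely what the downstream use of \eqref{pstima} needs.
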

\begin{proof}
Observe that for $t\in I_\delta=[-\delta,\delta]$, the arc element is
    
    $$s(t)\coloneqq \int_{-\delta}^te^{-\lambda(\gamma(t))}\,dt\leq \int_{-\delta}^\delta\dfrac{4+x(t)^2+y(t)^2}{4}=\mathcal{L}(\gamma)$$ 

and if $\delta\leq\sqrt{\epsilon}$ for some $  \epsilon>0$,  then  $\delta\leq \mathcal{L}(\gamma)\leq \delta(1+\epsilon)$.
    By definition, we have $\norm{\dot{\Gamma}}=1$ and $\phi'(s)=e^{\lambda(\Gamma(s))}$. So, 
    $$\hspace{-0.8cm}\begin{aligned}
        \int_{I_\delta} e^{-\lambda(\gamma(t))}\Big(k_{\mathbb{R}^2}(\gamma(t))-e^{2\lambda(\gamma(t))}\partial_\gu\lambda(\gamma(t))\Big)\, dt= \\
 \qquad\qquad       \int_{0}^{\mathcal{L}(\gamma)} e^{-\lambda(\Gamma(s))}\Big(k_{\mathbb{R}^2}(\Gamma(s))-e^{2\lambda(\Gamma(s))}\partial_\gu\lambda(\Gamma(s))\Big)\phi'(s)\, ds=\\
\qquad\qquad        \int_{0}^{\mathcal{L}(\gamma)} \Big(k_{\mathbb{R}^2}(\Gamma(s))-e^{2\lambda(\Gamma(s))}\partial_{\tilde{\gu}}\lambda(\Gamma(s))\Big)\, ds.
    \end{aligned} 
    $$ Moreover, $$\partial_{\tilde{\gu}}\lambda(\Gamma(s))=-\dfrac{1}{2}e^{\lambda(\Gamma(s))}\Gamma(s)\cdot\tilde{n}(s) $$ and \eqref{cu} holds.

We denote by $\tilde{C}_\delta$ a positive constant close to $1$ depending on $\delta$ that can change line by line.
Observe that 
$$\dfrac{1}{\tilde{C}_\delta}\leq e^{\lambda(\Gamma(s))}\leq 1,\quad  \dfrac{\delta}{2}\leq\partial_{\tilde{\gu}}\lambda(\Gamma(s))\leq\dfrac{\tilde C_\delta\delta}{2},$$ and from \eqref{GCconformal}, passing through the arc length parametrization of the curve $\gamma(t)$, denoting with $C$ a positive constant depending only on the conformal map $f$, we have $$
\int_{I_\delta} |k_{\mathbb{S}^2}(c(t))|^p\, dt=\int_0^{\mathcal{L}(\gamma)}e^{(p-1)\lambda(\Gamma(s))}\left|\left(k_{\R^2}(\Gamma(s))+Ce^{3\lambda(\Gamma(s))}\Gamma(s)\cdot \tilde{\gu}(s)\right)\right|^p\, ds\geq $$ $$
\begin{aligned}
&\geq\dfrac{1}{\tilde{C}_\delta}\int_0^{\mathcal{L}(\gamma)}\left|k_{\R^2}(\Gamma(s))+Ce^{3\lambda(\Gamma(s))}\Gamma(s)\cdot \tilde{\gu}(s)\right|^p\, ds\\ &\geq\dfrac{1}{\tilde{C}_\delta}\int_0^{\mathcal{L}(\gamma)}\left||k_{\R^2}(\Gamma(s))|-C\tilde{C}_\delta|\Gamma(s)\cdot \tilde{\gu}(s)|\right|^p\, ds \\&\geq
\dfrac{1}{\tilde{C}_\delta}\int_0^{\mathcal{L}(\gamma)}\left||k_{\R^2}(\Gamma(s))|-2C\tilde{C}_\delta\delta\right|^p\, ds.
\end{aligned}$$ Now, using $C_\delta=2C\tilde{C}_\delta$, we get \eqref{cuu}.
\end{proof}
\begin{lem}
    For $a,b>0$, there exists a constant $C\coloneqq C(p)$ such that
    \begin{equation}\label{pstima}
        |a-b|^p\geq a^p-Cba^{p-1}
    \end{equation}
\end{lem}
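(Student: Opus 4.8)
The plan is to exploit the $p$-homogeneity of \eqref{pstima} to collapse it to a one-variable inequality, and then identify that inequality with the tangent-line (Bernoulli) estimate for $x\mapsto x^p$. Since replacing $(a,b)$ by $(\kappa a,\kappa b)$ with $\kappa>0$ scales each of $|a-b|^p$, $a^p$ and $ba^{p-1}$ by $\kappa^p$, I would first divide through by $a^p>0$ and set $t:=b/a>0$, so that \eqref{pstima} becomes equivalent to producing a constant $C=C(p)$ with $|1-t|^p\ge 1-Ct$ for every $t>0$.

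I would then split on the size of $t$. For $t\ge 1$ one has $1-Ct\le 1-C\le 0$ as soon as $C\ge 1$, while $|1-t|^p\ge 0$, so the inequality holds trivially. The only case with content is $0<t<1$, where $|1-t|^p=(1-t)^p$ and, fixing the candidate constant $C=p$ (admissible for the previous regime since $p>1$), the target reduces to the scalar bound $(1-t)^p\ge 1-pt$ on $[0,1)$.

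That scalar bound is the single substantive step, and I would derive it from convexity of $x\mapsto x^p$ on $[0,\infty)$: the graph lies above its tangent line at $x=1$, giving $x^p\ge 1+p(x-1)$ for all $x\ge 0$, and the substitution $x=1-t$ produces exactly $(1-t)^p\ge 1-pt$; equivalently this is Bernoulli's inequality with exponent $p>1$. Undoing the normalization — i.e.\ applying the tangent-line inequality at $x=a$ to the point $y=a-b\ge 0$ — yields $(a-b)^p\ge a^p-pba^{p-1}$ directly, which is \eqref{pstima} with $C=p$.

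I do not expect a genuine obstacle here: the estimate is elementary and the only points requiring care are the case distinction $b\ge a$ versus $b<a$ and the verification that a \emph{single} constant, namely $C=p$, simultaneously serves both regimes. Accordingly I would simply state the lemma with $C:=p$.
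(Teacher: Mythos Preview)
Your proof is correct. Both you and the paper begin by exploiting homogeneity to reduce to a one-variable inequality, but you normalize differently: the paper divides by $b^p$ and sets $t=a/b$, arriving at $|t-1|^p\ge t^p-Ct^{p-1}$, whereas you divide by $a^p$ and set $t=b/a$, arriving at $|1-t|^p\ge 1-Ct$. Your normalization is the more fortunate one, since the resulting inequality is exactly Bernoulli's inequality (the tangent-line estimate for $x\mapsto x^p$ at $x=1$) on the nontrivial range $0<t<1$, and you read off the sharp explicit constant $C=p$. The paper instead argues softly: continuity gives a constant $C_m$ on each compact $[0,m]$, and the computation $\lim_{t\to\infty}\bigl((t-1)^p-t^p\bigr)/t^{p-1}=-p$ shows these constants do not blow up, yielding existence of a global $C$ without identifying it. Your route is shorter, more elementary, and delivers the optimal constant; the paper's route is a generic compactness-plus-asymptotics argument that would transfer to other inequalities of the same shape but gives less information here.
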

\begin{proof}
    Dividing by $b^p$ both terms in \eqref{pstima}, we need to prove that exists a constant $C$ such that $$|t-1|^p\geq t^p-Ct^{p-1} $$ 
for every $t>0$.

Now, the maps $t\mapsto |t-1|^p\ -t^p$ and $t\mapsto t^{p-1}$ are continuous, on every compact set $[0,m]$ with $m\in\R^+$, so there exists a constant $C_m $ for which the inequality holds in $[0,m]$. To have that the constant $C_m$ does not blow up as $m\rightarrow+\infty$, we observe that the limit $$\lim_{t\rightarrow+\infty}\dfrac{(t-1)^p-t^p}{t^{p-1}}=-p.$$  Then, there exists a global constant $C=\max_{m\in[1,\infty]}C_m$ such that \eqref{pstima} holds.
\end{proof}
\newpage
\section{Results}
The aim of this section is to prove the following theorem.
\begin{thm}\label{MainThm}
     Let $c:[0,L]\rightarrow\mathbb{S}^2$ be a rectifiable and open curve in $\mathbb{S}^2$ parametrized in arc-length. Then $$\mathcal{F}_p(c)<\infty \mbox{ for some }p>1\Longleftrightarrow c\in W^{2,p}([0,L],\mathbb{S}^2)$$ and in this case, there holds $$\mathcal{F}_p(c)=\int_c|k_{\mathbb{S}^2}(c(s))|^p\,ds=\int_0^L\norm{\Ddot{c}^\top(s)}^p\,ds.$$
\end{thm}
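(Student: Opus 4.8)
The plan is to prove the identity $\mathcal{F}_p(c)=\int_c|k_{\mathbb{S}^2}|^p\,ds$ with both sides finite or infinite simultaneously, which already contains the claimed equivalence. Indeed, for an arc-length curve on $\mathbb{S}^2$ one has $\ddot c=\ddot c^\top-c$ with $\|\ddot c^\top\|=|k_{\mathbb{S}^2}|$ by \eqref{Spher.curv} and $\|c\|=1$, so $\|\ddot c\|^2=\|\ddot c^\top\|^2+1$ and therefore $\int_0^L\|\ddot c\|^p\,ds<\infty$ if and only if $\int_0^L\|\ddot c^\top\|^p\,ds<\infty$; hence $c\in W^{2,p}$ is equivalent to $\int_c|k_{\mathbb{S}^2}|^p\,ds<\infty$, and it suffices to establish the integral identity. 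Throughout, the essential device is the conformal reduction of Proposition~\ref{PropPLA}: after a rigid rotation of the sphere sending a chosen point to the north pole, a short arc of $c$ lies in $\mathbb{S}^2_*$ and pulls back under $f^{-1}$ to a planar curve, on which the Euclidean theory of \cite{MS1} is available.

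For the lower bound $\mathcal{F}_p(c)\ge\int_c|k_{\mathbb{S}^2}|^p\,ds$ I would take any admissible sequence $\{P_h\}\subseteq\Sigma_\epsilon(c)$ with $\mu_c(P_h)\to0$ and $\sup_h\mathbf{k}_p(P_h)<\infty$, and work with the smoothed curves $\gamma(P_h)$, recalling that $\mathbf{k}_p(P_h)=\int_{\gamma(P_h)}|k_{\mathbb{S}^2}|^p\,dt$ and that $\gamma(P_h)\to c$ in Fréchet distance. Covering $[0,L]$ by finitely many short arcs $I_\delta^{(j)}$, on each chart I apply \eqref{cuu} to bound the planar $p$-curvature of $f^{-1}\circ\gamma(P_h)$ from above by its spherical $p$-curvature, and then use the elementary estimate \eqref{pstima} with $a=|k_{\R^2}(\Gamma)|$ and $b=C_\delta\delta$ to peel off the conformal correction, the leftover $\int|k_{\R^2}|^{p-1}$ term being absorbed by Young's inequality for $\delta$ small. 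Uniform boundedness of the planar $p$-curvatures lets me invoke \cite{MS1}: the limiting planar curve is $W^{2,p}$ and its $p$-curvature is lower semicontinuous along the sequence. Transferring back through the smooth, locally bi-Lipschitz map $f$ yields $c\in W^{2,p}$ on each chart, hence globally, together with $\int_{c}|k_{\mathbb{S}^2}|^p\,ds\le\liminf_h\mathbf{k}_p(P_h)+o(1)$ as $\delta\to0$, where the correction vanishes because the exact relation underlying \eqref{cu}--\eqref{cuu} has $e^\lambda\to1$ and $\Gamma\cdot\tilde\gu=O(\delta)$ on small charts; refining the cover and taking the infimum over admissible sequences gives the claim.

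For the upper bound $\mathcal{F}_p(c)\le\int_c|k_{\mathbb{S}^2}|^p\,ds$ I would assume $c\in W^{2,p}$, so $c\in\mathcal{C}^1$ by Sobolev embedding, and construct a recovery sequence of inscribed polygonals $P_h$ with vertices equally spaced in arc length, of edge length $\ell_h\to0$ (which forces $\mu_c(P_h)\to0$). Since the unit tangent varies continuously the turning angles are small, and by the asymptotic $\mathbf{k}_p(P_h)\sim\sum_i\theta_i^p\,\ell_h^{1-p}$ recorded in the Remark after Definition~\ref{F_p} and computed via Appendix~\ref{AA}, the quantity $\mathbf{k}_p(P_h)$ becomes a Riemann-type sum converging to $\int_c|k_{\mathbb{S}^2}|^p\,ds$. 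Because $k_{\mathbb{S}^2}$ is only $L^p$ (not continuous) when $c$ is merely $W^{2,p}$, the rigorous route is again to pull back to the plane and import the Euclidean recovery construction of \cite{MS1}, transferring it to $\mathbb{S}^2$ through Proposition~\ref{PropPLA} with the same vanishing-correction estimate, so that the constructed $\gamma(P_h)$ realize the optimal limit.

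The hard part is the lower-bound direction, and within it two points require care. First, the conformal correction $C_\delta\delta$ in \eqref{cuu} does not vanish pointwise, so one must genuinely exploit \eqref{pstima} to remove it and then control the remainder \emph{uniformly} in $h$ before sending $\delta\to0$. Second, the polygonals need not be equilateral, so the min-length rule $\ell_i=\min\{\bar\ell_{i-1},\bar\ell_i\}$ built into $\gamma(P)$ must be tracked through the planar comparison, and one must verify that Fréchet convergence $\gamma(P_h)\to c$ together with uniformly bounded $p$-curvature upgrades to weak $W^{2,p}$ convergence so that lower semicontinuity legitimately applies. Reconciling the modulus constraint $\mu_c(P_h)\to0$ with the overlapping local charts and the reparametrizations needed to compare $\gamma(P_h)$ with $c$ is the technical core of the argument.
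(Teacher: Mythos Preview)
Your overall two-inequality strategy is correct, but you have inverted where the conformal machinery is needed, and with it your assessment of which direction is hard.

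For the inequality $\mathcal{F}_p(c)\ge\int_c|k_{\mathbb{S}^2}|^p\,ds$ (your ``lower bound''), the paper does \emph{not} pass through conformal charts at all. It argues directly in $\mathbb{S}^2\subset\R^3$: given an optimal sequence $\{P_h\}$ with $\mu_c(P_h)\to0$ and $\mathbf{k}_p(P_h)\to\mathcal{F}_p(c)$, one reparametrizes the smoothed curves $\gamma(P_h)$ by arc length and rescales to $[0,L]$, obtaining $\gamma_h$. Since $\mathbf{k}_p(P_h)=\int\|\ddot c_{P_h}^\top\|^p$ and the normal component equals the curve itself, $\{\dot\gamma_h\}$ is bounded in $W^{1,p}$; Fr\'echet convergence $\gamma(P_h)\to c$ plus $L_h\to L$ identifies the weak limit as $\dot c$, and weak lower semicontinuity of the $L^p$ norm finishes the job. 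This is a short, global argument with no chart patching, no use of \eqref{cuu} or \eqref{pstima}, and no appeal to the planar theory of \cite{MS1}. Your chart-based route could be made to work, but it manufactures the overlap/boundary issues you yourself flag as ``the technical core'', which simply do not arise in the paper's proof.

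Conversely, the conformal comparison of Proposition~\ref{PropPLA}, together with \eqref{pstima}, is deployed by the paper exclusively in the \emph{recovery} direction $\mathcal{F}_p(c)\le\int_c|k_{\mathbb{S}^2}|^p\,ds$ (your ``upper bound''), and this is the long proof. The paper first builds an \emph{equilateral} polygonal (equal geodesic edge length $\ell$, not equal arc-length spacing) and bounds $\mathbf{k}_p(P_h)\le\tilde c_\epsilon\sum\ell^{1-p}\theta_i^p$. It then bounds $\int|k_{\mathbb{S}^2}|^p$ from \emph{below} by a Bruckstein-type average over shifted windows, applies \eqref{cuu} and \eqref{pstima} locally to pass to planar curvature, and uses two applications of Jensen plus \cite[Thm.~5.3]{MS1} to recover the discrete sum $\sum\ell^{1-p}\theta_i^p$ from the planar integral; the error term $I_2$ is killed not by Young's inequality but by the absolute-continuity estimate \eqref{ACstima} with $q=p-1$. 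Your sketch of this direction is headed the right way but misses the averaging step, which is what actually converts $\int|k|^p$ into a bound on the discrete $\theta_i$'s; the phrase ``Riemann-type sum converging to $\int$'' is not how the argument runs, since only the one-sided inequality $\int\ge\tilde c_\epsilon\mathbf{k}_p(P_h)-g(\epsilon)$ is established.
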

To prove this Theorem, we split it into two theorems in which we prove an upper and a lower bound on the integral of the geodesic curvature of the curve raised to the power $p$. 
\begin{oss}\label{NoCorners}
   We first observe that the finiteness of the functional $\mathcal{F}_p(c)$ implies that the curve has no corners. Indeed, suppose that the curve has a corner in $c\left(\Bar{t}\right)$ with a turning angle $\theta$, then we construct a polygonal sequence using as partition $\left\{\Bar{t}-1/h,\Bar{t},\Bar{t}+1/h\right\}$ for which $$\mathbf{k}_p(P_h)\geq h^{p-1}\left(\theta_{\Bar{t}}^{(h)}\right)^p$$ where $\theta^{(h)}_{\Bar{t}}$ is the turning angle in $P_h(\Bar{t})$. As $h\rightarrow+\infty$, we have $\theta^{(h)}_{\Bar{t}}\rightarrow\theta$ and $$h^{p-1}\left(\theta_{\bar t}^{(h)}\right)^p\rightarrow +\infty.$$
Then, the curve cannot have any corners if its $p$-curvature is finite.
\end{oss}
\subsubsection*{Upper bound} We start proving the upper bound on the integral of the geodesic curvature raised to the power $p$ of a spherical curve with finite $p$-curvature.
\begin{thm}\label{thm1}
    Let $c$ be a rectifiable and open curve in $\mathbb{S}^2$ parametrized by arc-length such that $\mathcal{F}_p(c)<\infty$ for some $p>1$. Then $c \in W^{2,p}([0,L],\mathbb{S}^2)$ and $$\int_0^L\norm{\Ddot{c}^\top(s)}^p\, ds\leq \mathcal{F}_p(c)<\infty. $$ 

\end{thm}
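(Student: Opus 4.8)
The plan is to run the direct method of the calculus of variations on the smoothed curves $\gamma(P_h)$, exploiting that $\xi\mapsto\norm{\xi}^p$ is convex and that the spherical constraint turns the tangential second derivative into an affine expression of $(c,\Ddot c)$, so that the geodesic energy is weakly lower semicontinuous under $W^{2,p}$ convergence. Since $\mathcal{F}_p(c)<\infty$, Definition~\ref{F_p} provides inscribed polygonals $\{P_h\}\ll c$ with $\mu_c(P_h)\to 0$ and $\mathbf{k}_p(P_h)\to\mathcal{F}_p(c)$. For each $h$ I would parametrize the smoothed curve $\gamma(P_h)$ proportionally to arc length on $[0,1]$, obtaining $c_h\colon[0,1]\to\mathbb{S}^2$ with $\norm{\dot c_h}\equiv L_h:=\mathcal{L}(\gamma(P_h))$; by the very definition of the $p$-rotation one has $\int_0^1\norm{\Ddot{c_h}^\top}^p\,d\sigma=L_h^{2p-1}\,\mathbf{k}_p(P_h)$, since $\Ddot{c_h}^\top=L_h^{2}(\text{arc-length geodesic curvature})$ and a change of variables produces the factor $L_h^{2p-1}$.

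Next I would establish the length convergence $L_h\to L$. On the one hand, an inscribed polygonal never exceeds the curve, $\mathcal{L}(P_h)\le\mathcal{L}(c)=L$, and the corner smoothing of Appendix~\ref{AA} replaces each corner by a constant-curvature arc that does not increase the polygonal length, so $L_h\le\mathcal{L}(P_h)\le L$. On the other hand $d(\gamma(P_h),c)\to 0$ in the Fr\'echet sense, because the polygonals converge to $c$ and, as $\mu_c(P_h)\to 0$, the roundings stay uniformly small; lower semicontinuity of length then gives $\liminf_h L_h\ge\mathcal{L}(c)=L$, whence $L_h\to L$.

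I would then extract a limit. From $\norm{c_h}=1$ one gets $c_h\cdot\Ddot{c_h}=-\norm{\dot c_h}^2=-L_h^2$, so $\Ddot{c_h}=\Ddot{c_h}^\top-L_h^2 c_h$ and $\norm{\Ddot{c_h}}^p\le C_p(\norm{\Ddot{c_h}^\top}^p+L_h^{2p})$; together with $\norm{\dot c_h}=L_h$ and $\norm{c_h}=1$ this shows $\{c_h\}$ is bounded in $W^{2,p}([0,1],\R^3)$. Hence, up to a subsequence, $c_h\rightharpoonup\bar c$ weakly in $W^{2,p}$ and strongly in $\mathcal{C}^1$; the constraint $\norm{\bar c}=1$ passes to the limit and $\norm{\dot{\bar c}}\equiv L$. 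Since $d(\bar c,c)=0$ (from $d(\gamma(P_h),c)\to 0$ and uniform convergence), $\bar c$ is a reparametrization of $c$, so in particular $c\in W^{2,p}([0,L],\mathbb{S}^2)$.

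Finally I would pass to the limit in the energy. Writing $\Ddot{c_h}^\top=\Ddot{c_h}+L_h^2 c_h$, the first term converges weakly in $L^p$ to $\Ddot{\bar c}$ while the second converges strongly (as $L_h\to L$ and $c_h\to\bar c$ uniformly), so $\Ddot{c_h}^\top\rightharpoonup\Ddot{\bar c}+L^2\bar c=\Ddot{\bar c}^\top$ weakly in $L^p$. Convexity of $\xi\mapsto\norm{\xi}^p$ then yields $\int_0^1\norm{\Ddot{\bar c}^\top}^p\,d\sigma\le\liminf_h\int_0^1\norm{\Ddot{c_h}^\top}^p\,d\sigma=\lim_h L_h^{2p-1}\mathbf{k}_p(P_h)=L^{2p-1}\mathcal{F}_p(c)$. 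Rewriting the left-hand side in the arc length of $\bar c$ (that is, of $c$) it equals $L^{2p-1}\int_0^L\norm{\Ddot{c}^\top}^p\,ds$, and dividing by $L^{2p-1}$ gives $\int_0^L\norm{\Ddot{c}^\top}^p\,ds\le\mathcal{F}_p(c)$. I expect the delicate point to be the second paragraph: the length convergence $L_h\to L$ and the Fr\'echet convergence $\gamma(P_h)\to c$ both rely on the quantitative geometry of the smoothing of Appendix~\ref{AA}, namely that the inserted constant-curvature arcs neither lengthen the polygonal nor push it away from $c$ as $\mu_c(P_h)\to 0$; everything else is the standard weak-compactness and lower-semicontinuity machinery once the spherical constraint reduces $\norm{\Ddot{c}^\top}$ to an affine function of $(c,\Ddot c)$.
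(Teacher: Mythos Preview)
Your proposal is correct and follows essentially the same route as the paper: both select an optimizing sequence $\{P_h\}$, reparametrize the smoothed curves $\gamma(P_h)$ to a fixed interval, establish $L_h\to L$ via $\mathcal{L}(\gamma(P_h))\le\mathcal{L}(P_h)\le\mathcal{L}(c)$ combined with Fr\'echet convergence and lower semicontinuity of length, and then pass to the limit in the geodesic energy using the affine identity $\Ddot c^\top=\Ddot c+(\text{speed})^2 c$ together with weak $L^p$ lower semicontinuity of $\xi\mapsto\norm{\xi}^p$. The only cosmetic difference is that you work on $[0,1]$ and keep track of the scaling factor $L_h^{2p-1}$, whereas the paper rescales directly to $[0,L]$; your treatment of the $W^{2,p}$ bound and the identification of the weak limit is in fact slightly more explicit than the paper's.
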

\begin{proof}
    By finiteness of $\mathcal{F}_p(c)$, we can take a sequence $\{P_h\}$ of polygonal curves inscribed in $c$ satisfying $\mu_c(P_h) \rightarrow 0$ and $\mathbf{k}_p(P_h) \rightarrow \mathcal{F}_p(c)$. For each $h$, let $c_{P_h} : [0, L_h] \rightarrow \mathbb{S}^2$ be the arc-length parametrization of the curve 
    $\gamma(P_h)$ obtained by Construction~\ref{constr}, where $L_h \coloneqq \mathcal{L}(\gamma(P_h))$, and let $ \gamma_h : [0, L]\rightarrow \mathbb{S}^2$ be given by $$\gamma_h(s)\coloneqq c_{P_h}(sL_h/L),$$
    where $L \coloneqq \mathcal{L}(c)$. By piecewise smoothness, apart from a finite set of points, one has 
    $$k_{\mathbb{S}^2}(c_{P_h})(\lambda) = \norm{\Ddot{c_{P_h}}^\top(\lambda)}$$ for $\lambda \in [0, L_h]$ and  $$\Ddot{\gamma}^\top_h(s) = 
    (L_h/L)^2\Ddot{c_{P_h}}(\lambda)+c_{P_h}(\lambda)$$
    for $s \in [0, L]$, with $\lambda = s(L_h/L)$. Therefore, 
\begin{equation}\label{Kp}
        \mathbf{k}_p(P_h)\coloneqq \int_{\gamma(P_h)} |k_{\mathbb{S}^2}(\gamma(P_h))|^p\,ds=\int_0^L\norm{ (L_h/L)^2 \Ddot{c_{P_h}}(\lambda)+c_{P_h}(\lambda)}^p\,d\lambda.
    \end{equation}
   
    Now, we have $d(\gamma(P_h),P_h)\leq\mu_c(P_h)$ for every $h$, whereas $d(P_h,c)\rightarrow 0$; see \cite[Definition 3.1]{MS1}. Since $\mu_c(P_h) \rightarrow 0$, we obtain $d(\gamma (P_h), c) \rightarrow 0$, and hence by lower semicontinuity we infer that $$\mathcal{L}(c)\leq\liminf_{h\rightarrow +\infty}{\mathcal{L}(\gamma(P_h))} .$$
    Using the fact that $\mathcal{L}(\gamma (P_h )) \leq \mathcal{L}(P_h ) \leq \mathcal{L}(c)$ for every $h$, we deduce that $L_h\rightarrow L$. As a consequence, recalling that $\mathbf{k}_p(P_h) \rightarrow \mathcal{F}_p(c)$, by \eqref{Kp} we obtain
    \begin{equation}
       \lim_{h\rightarrow \infty }{\int_0^L\norm{ (L_h/L)^2\Ddot{c_{P_h}}(\lambda)+c_{P_h}(\lambda)}^p\,d\lambda}  \, =\mathcal{F}_p(c).
    \end{equation}    
Since $p > 1$, the sequence $\{\Dot{\gamma}_h\}$ converges strongly in $W^{1,1}$ to some function $v \in W^{1,1}(I_L, \mathbb{S}^2)$. By using that $\gamma_h$ converges to the Lipschitz function $c$ strongly in $L^1(I_L, \mathbb{S}^2)$, we obtain $v = \Dot{c}$ a.e.; hence, possibly passing to a (not relabelled) subsequence, $\{\Dot{\gamma}_h\}$ converges to $\Dot{c}$ weakly in $W^{1,p}(I_L, \mathbb{S}^2)$. In particular, $\Dot{c} \in W^{1,p}(I_L,\mathbb{S}^2 ) $ and
$\Ddot{\gamma}_h$ converges to $\Ddot{c}$ weakly in $L^p$  and, using that the normal component is the curve $c$ itself, we have that $\Ddot{\gamma}^\top_h$ converges to $\Ddot{c}^\top$ weakly in $L^p$.\\ Hence the curve $c$ is $\mathcal{C}^1$ and, by lower semicontinuity,
$$\int_0^L\norm{\Ddot{c}^\top(s)}^p\, ds\leq \mathcal{F}_p(c)<\infty. $$ 
\end{proof}
We observe that, supposing $c\in W^{2,p}(I,\mathbb{S}^2)$, we have the following absolutely continuity property.

\begin{oss}\label{AC}
    Suppose that the spherical curve parametrized by arc length $c:I_L\rightarrow\mathbb{S}^2$ is in the Sobolev space $W^{2,p}$, then the functional $$\int_J |k_{\mathbb{S}^2}(c(t))|^p\,dt$$ is absolutely continuous. Then, for every $\epsilon>0$, there exists $\delta_{AC}\coloneqq\delta_{AC}(\epsilon)$ such that if an interval $J\subseteq I$ is small, i.e., $|J|<\delta_{AC}$, then $$\int_J|k_{\mathbb{S}^2}(c(t))|^p\,dt\leq \epsilon. $$ Moreover, for every $q\in   [1,p]$, we have $$\left(\dashint_J|k_{\mathbb{S}^2}(c(t))|^{q}\,dt\right)^{\tfrac{p}{q}}\leq\dashint_J|k_{\mathbb{S}^2}(c(t))|^{p}\,dt,$$ and \begin{equation}\label{ACstima}
        \int_J|k_{\mathbb{S}^2}(c(t))|^{q}\,dt\leq\delta_{AC}^{1-q/p}\left(\int_J|k_{\mathbb{S}^2}(c(t))|^{p}\,dt\right)^{\tfrac{q}{p}}\leq \epsilon^{q/p}\delta_{AC}^{\tfrac{p-q}{p}}. 
    \end{equation}

\end{oss}
\subsubsection*{Lower bound}
Now, we are able to prove the following lower bound. The idea is to localize the Sobolev spherical curve, use the  comparison result given by Proposition~\ref{PropPLA}, and \cite[Theorem 5.3]{MS1} for the Euclidean comparison curve.

\begin{thm}\label{lowerbound}
Let $c$ be a rectifiable and open curve in $\mathbb{S}^2$ parametrized by arc-length of class  $W^{2,p}([0,L],\mathbb{S}^2)$ for some $p>1$. 
Then,  for every $\epsilon>0$ small, there exists a sequence $\{P_h\}$ of polygonal curves inscribed in $c$ and a constant $c_\epsilon$ such that $\mu_c(P_h) \rightarrow 0 $, $c_\epsilon\rightarrow 1$ as $\epsilon\rightarrow 0$ and
\begin{equation}
    c_\epsilon\mathbf{k}_p(P_h)-g(\epsilon)\leq \int_0^L\vert k_{\mathbb{S}^2}(c(s)) \vert^p\,ds<\infty,
\end{equation}
where $g(\epsilon)\rightarrow 0$ as $\epsilon\rightarrow 0$.
 Therefore $$\mathcal{F}_p(c)\leq\int_0^L\vert k_{\mathbb{S}^2}(c(s))\vert^p\, ds.$$ 

\end{thm}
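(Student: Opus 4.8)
The plan is to construct, for each fixed small $\epsilon>0$, an explicit approximating sequence $\{P_h\}$ of equilateral polygonals whose $p$-rotation is controlled from above by the target integral $\int_0^L|k_{\mathbb{S}^2}(c)|^p\,ds$, up to multiplicative and additive errors that vanish as $\epsilon\to 0$. Since Theorem~\ref{MainThm} is a relaxation (Lebesgue--Serrin) statement, the lower bound on $\mathcal{F}_p$ is really an upper bound on the energy of a well-chosen competitor, so the whole burden is to exhibit polygonals $P_h$ realizing $\mathbf{k}_p(P_h)\lesssim \int_0^L|k_{\mathbb{S}^2}(c)|^p\,ds$. First I would cover $[0,L]$ by finitely many arcs $I_\delta^{(j)}=[s_j-\delta,s_j+\delta]$ of length comparable to $\delta\leq\sqrt{\epsilon}$, and on each such arc apply the stereographic-type conformal chart $f$ so that the spherical piece $c|_{I_\delta^{(j)}}$ is the $f$-image of a planar curve $\gamma_j=f^{-1}\circ c$. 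The point of working locally is that near $c(0)=(0,0,1)$ the conformal factor is close to $1$ and Proposition~\ref{PropPLA} gives quantitative two-sided comparison between the spherical and Euclidean geodesic curvatures, with the correction term bounded by $C_\delta\delta$.

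The core of the argument is then to transport the Euclidean construction. On each planar piece I would invoke \cite[Theorem~5.3]{MS1}: since $c\in W^{2,p}$ implies $\gamma_j\in W^{2,p}(I_\delta,\mathbb{R}^2)$, there is a sequence of inscribed equilateral planar polygonals $Q_h^{(j)}$, with edge length $\ell_h\to 0$ and mesh tending to zero, whose Euclidean $p$-rotation $\mathbf{k}_p(Q_h^{(j)})$ converges to $\int_{I_\delta^{(j)}}|k_{\mathbb{R}^2}(\gamma_j)|^p\,ds$. Pushing these polygonals forward by $f$ (taking their vertices to lie on $c$) yields spherical polygonals $P_h^{(j)}$, and by the remark after Definition~\ref{F_p} the spherical $p$-rotation $\mathbf{k}_p(P_h)\sim\sum_i\theta_i^p\ell^{1-p}$ agrees with the Euclidean one up to factors $\to 1$ as $\ell,\theta_i\to 0$. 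Concatenating over $j$ produces the global $\{P_h\}$ with $\mu_c(P_h)\to 0$. The comparison \eqref{cuu} of Proposition~\ref{PropPLA}, read in the opposite direction together with the elementary inequality \eqref{pstima}, converts $\int|k_{\mathbb{R}^2}(\Gamma)|^p$ into $\int|k_{\mathbb{S}^2}(c)|^p$ up to a factor $\tilde C_\delta\to 1$ and an additive term controlled by $C_\delta\delta\cdot\int|k_{\mathbb{S}^2}|^{p-1}$; the latter is absorbed using the absolute-continuity estimate \eqref{ACstima} of Remark~\ref{AC}, which bounds lower-power integrals over short arcs. Setting $c_\epsilon:=\prod_j\tilde C_\delta^{-1}\to 1$ and collecting the additive errors into $g(\epsilon)\to 0$ gives exactly the claimed inequality $c_\epsilon\mathbf{k}_p(P_h)-g(\epsilon)\leq\int_0^L|k_{\mathbb{S}^2}(c)|^p\,ds$.

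Finally, passing to the limit $h\to\infty$ and then taking the infimum over $\epsilon$ in Definition~\ref{F_p} yields $\mathcal{F}_p(c)\leq\int_0^L|k_{\mathbb{S}^2}(c(s))|^p\,ds$, which is the asserted conclusion. The main obstacle I anticipate is the bookkeeping at the junctions between consecutive charts: the conformal normalization $c(0)=(0,0,1)$ is stated for a single arc, so one must either recenter the chart on each piece (checking that $C_\delta$ and $\tilde C_\delta$ can be taken uniform over all pieces, since they depend only on $\delta$ and $f$) or ensure the overlap contributes no spurious turning-angle energy. A secondary technical point is matching the \emph{equilateral} requirement in the Euclidean theorem \cite[Thm.~5.3]{MS1} with the arc-length parametrization across charts, since $f$ distorts edge lengths by the factor $e^{\lambda}$; because $e^{\lambda}$ is within $\tilde C_\delta$ of $1$ on $I_\delta$, this distortion is harmless and merely contributes to the constant $c_\epsilon$, but it must be tracked carefully so that the additive error genuinely stays $O(C_\delta\delta)$ and vanishes with $\epsilon$.
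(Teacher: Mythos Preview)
Your strategy is plausible but genuinely different from the paper's, and the two obstacles you flag are precisely the ones the paper's approach is designed to avoid.

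The paper does \emph{not} cover $[0,L]$ by finitely many conformal charts and build a polygonal on each. Instead it constructs a single \emph{global} equilateral spherical polygonal $P_h$ with edge length $\ell\sim\delta(\epsilon)$, and then lower-bounds $\int_0^L|k_{\mathbb{S}^2}(c)|^p\,dt$ directly. The key device is a sliding-average trick in the spirit of Bruckstein et al.\ \cite{Bruckstein01112001}: for a piecewise-affine reparametrization $\psi$ sending $s_i\mapsto t_i$,
\[
\int_0^L|k_{\mathbb{S}^2}(c)|^p\,dt \;\geq\; \sum_{i=1}^{h-1}\frac{1}{\ell}\int_0^\ell\Bigl(\int_{\psi(s_{i-1}+a)}^{\psi(s_i+a)}|k_{\mathbb{S}^2}(c)|^p\,dt\Bigr)da.
\]
Only now does the conformal chart enter, and it is recentered on each \emph{sliding window} $[\psi(s_{i-1}+a),\psi(s_i+a)]$ separately. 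On each window Proposition~\ref{PropPLA} converts to Euclidean curvature, inequality \eqref{pstima} splits off the additive correction as an $I_2$ term (handled by \eqref{ACstima}), and two applications of Jensen reduce the main term $I_1$ to $\tilde c_\epsilon\ell^{1-p}$ times the $p$-th power of a difference of planar secant vectors. At this last step \cite[Thm.~5.3]{MS1} is invoked as a \emph{pointwise estimate} relating that secant-difference norm to the turning angle $\theta_i$, not as a convergence statement for planar $p$-rotation. Summing gives $I_1\geq \tilde c_\epsilon\sum_i\ell^{1-p}\theta_i^p\geq \tilde c_\epsilon\mathbf{k}_p(P_h)$.

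What this buys: because the polygonal is global, there are no chart junctions and no spurious turning angles; because the conformal chart is applied to a window of the \emph{curve} rather than to the polygonal, one never has to compare spherical geodesic edges with $f$-images of planar segments and reconcile the two notions of turning angle. In your approach, by contrast, the polygonal $P_h^{(j)}=f(Q_h^{(j)})$ has vertices on $c$ but its spherical turning angles are not a priori the planar ones of $Q_h^{(j)}$ (conformality preserves angles between the \emph{$f$-images} of segments, which are not great-circle arcs); you would need an additional estimate $|\theta_i^{\mathbb{S}^2}-\theta_i^{\mathbb{R}^2}|=O(\delta\ell)$ and to track its effect on $\sum\theta_i^p\ell^{1-p}$. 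Your route is not wrong, but the Bruckstein averaging is the missing idea that makes the argument clean.
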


\begin{proof}
  Fix $\epsilon>0$.

\textbf{Step 1.} We find upper and  lower bounds on the length of the edges of an equilateral polygonal inscribed in the curve $c$ and we analyze the $p$-curvature functional of this polygonal.

We start producing an equilateral polygonal curve inscribed in $c$. 
We observe that there exists $\delta_1\coloneqq \delta_1(\epsilon)$ such that if the edges of any equilateral polygonal, inscribed in $c$, have length $\ell$ less than $\delta_1$, then 
\begin{equation}\label{ConstrTempi}
    \ell(1+\epsilon)\geq |t_{i+1}-t_i|\geq \ell.
\end{equation}
This is implied by the regularity of the curve $c$ and using that it is parametrized by arc length. Indeed, $c$ is $1$-Lipschitz and $$\ell=d_{\mathbb{S}^2}(c(t_i),c(t_{i+1}))\leq |t_{i+1}-t_i|.$$ For the other inequality, we use that $$\dfrac{d_{\mathbb{S}^2}(c(t_{i+1}),c(t_{i}))}{|t_{i+1}-t_{i}|}\rightarrow 1,$$ and by the uniformly continuity of the (metric) derivative of $c$, we have the existence of such $\delta_1$.

Fix $\tilde \ell=\tfrac{\delta}{2}$  where $\delta\coloneqq\min\left\{\delta_1,\delta_{AC},\sqrt{\epsilon}\right\}$ and let $h$ be the unique integer for which there is a sequence of times $t_0=0<\cdots<t_h=L$ such that $d_{\mathbb{S}^2}(c(t_{i-1}),c(t_i))=\tilde\ell$ for $i=1,\ldots, h-1$ and $d_{\mathbb{S}^2}(c(t_{h-1}),c(t_h))=\hat\ell<\tilde \ell$. Then, we have a partition $\mathcal{P}_h$ of $[0,L]$ of the form 
 $$\mathcal{P}_h=\{t_0,\ldots,t_h\},\quad I_i=[t_{i-1},t_{i}],\quad|I_i|=\tilde\ell_i
$$ where the constraint \begin{equation}\label{Constr.Lunghezze}
    \sum_{i=1}^h\tilde{\ell}_h=L
\end{equation}  holds. If $\hat\ell=\tilde\ell$, we end up with an equilateral polygonal on $\mathcal{P}_h$ choosing $\ell=\tilde \ell$. If this is not the case, consider $$\ell\coloneqq\max\Big\{ l\leq\delta/2\Big|\,\exists\, \{0=t_0<\cdots<t_h=L\}: d_{\mathbb{S}^2}(c(t_i),c(t_{i+1}))=l   \Big\},$$ where the maximum exists because we have fixed the number of edges $h$ and the length $\ell$ can be obtained enlarging the length $\hat \ell$ of the last edge and shrinking uniformly the length of the first $h-1$ edges. We observe that the length $\ell$ must be greater of $\delta/3$. Indeed, supposing by contradiction that $\ell<\delta/3$, from \eqref{ConstrTempi} and \eqref{Constr.Lunghezze} we have the inequalities
$$ h\delta/3 (1+\epsilon)\geq\sum_{i=1}^h\tilde{\ell}_h=L\geq(h-1)\delta/2+\tilde\ell_h$$ that makes $\tilde\ell_h$ negative, which cannot hold.

We observe, that from $\ell\geq \delta/3$ and $h\ell(1+\epsilon)\leq L$, we obtain \begin{equation}\label{hconstr}
    h\leq \dfrac{3\tilde{c}_\epsilon L}{\delta(1+\epsilon)}.
\end{equation}

Moreover, the obtained equilateral polygonal $P_h=P(\mathcal{P}_h)$ with turning angles $\theta_i$ in $\quad$ $P_h(t_i)=c(t_i)$ for $i=1,\ldots,h-1$ has small angles $\theta_i$ if $\delta$ is small, indeed localizing around $P(t_i)$ and using Proposition~\ref{PropPLA} one has $\theta_i\leq\delta (1+\epsilon)$ and $$\tan(\theta_i/2)\leq(1+\epsilon)\theta_i/2.$$ 
We define as $\Psi(\ell,\theta)$  the continuous function $$\Psi(\ell,\theta)\coloneqq \sqrt{\sin^2(\ell/2)+\sin^2(\theta/2)\cos^2(\ell/2)}.$$ From $\ell<\sqrt{2\epsilon}$, we have $\sin(\ell)\geq \ell(1-\ell^2/6)\geq \ell(1-\epsilon)$  and  $\cos(\ell)\geq (1-\epsilon)$.\\
From now on, we denote by $\tilde{c}_\epsilon$ a constant that can change line by line such that $\tilde{c}_\epsilon\rightarrow 1$ as $\epsilon\rightarrow 0$.

We first bound, for every $h$, the $p$-rotation $\mathbf{k}_p(P_h)$ of such polygonal from above, namely   
 $$\begin{aligned}
\mathbf{k}_p(P_h)&= \sum_{i=1}^{h-1}2\arctan\left(\frac{\Psi(\theta,\ell)}{\cos(\theta_i/2)\cos(\ell/2)}\right)\cdot \frac 1{\Psi(\theta,\ell)}\cdot \frac{\sin^p(\theta_i/2)}{(\cos(\theta_i/2)\sin(\ell/2))^{p-1}}\,\\    &\leq \sum_{i=1}^{h-1} 2 \frac{\tan^p(\theta_i/2)}{\cos(\ell/2)\sin^{p-1}(\ell/2)}\leq\sum_{i=1}^{h-1} 2^{p}\tilde{c}_\epsilon\tan^p(\theta_i/2)\ell^{1-p} 
\end{aligned}$$
whence we estimate
\begin{equation}\label{primadis}
    \mathbf{k}_p(P_h)\leq \tilde{c}_\epsilon\sum_{i=1}^{h-1} \ell^{1-p}\theta_i^p.
\end{equation}
\textbf{Step 2.} We bound from below the integral of the $p$-power of the geodesic curvature.

Let $\psi:[0,h\ell]\rightarrow [0,L]$ be a reparametrization such that $\psi(s_i)=t_i$ and $\psi$ is affine in $[s_i,s_{i+1}]$, with $$\psi'(s)_{\Big|(s_i,s_{i+1})}=\dfrac{t_{i+1}-t_i}{s_{i+1}-s_i}.$$

    Following \cite{Bruckstein01112001}, we have $$ 
   \int_0^L |k_{\mathbb{S}^2}(c(t))|^p\, dt\geq\sum_{i=1}^{h-1}\dfrac{1}{\ell}\int_0^\ell\left(\int\limits_{\psi(s_{i-1}+a)}^{\psi(s_{i}+a)}|k_{\mathbb{S}^2}(c(t))|^p\, dt\right) \,da.$$

   Using Proposition~\ref{PropPLA}, we compare the geodesic curvature of the localizing curves $$c_i\coloneqq c_{|_{[\psi(s_{i-1}+a),\psi(s_{i}+a)]}},$$ with the curvature in $\R^2$ of the 
comparison curves $\Gamma_i(s)$, namely 
$$\int\limits_{\psi(s_{i-1}+a)}^{\psi(s_{i}+a)}\vert k_{\mathbb{S}^2}(c_i(t)) \vert^p\, dt=\int\limits_0^{\mathcal{L}(a,i)}e^{(p-1)\lambda(\Gamma_i(s))}\left\vert k_{\R^2}(\Gamma_i(s))+\dfrac{1}{2}e^{3\lambda(\Gamma_i(s))}\Gamma_i(s)\cdot \tilde{\gu}_i(s)\right\vert^p\, ds,    
    $$ where $\mathcal{L}(a,i)\coloneqq\mathcal{L}\left(\gamma_{i{|_{[\psi(s_{i-1}+a),\psi(s_{i}+a)]}}}\right)\leq \tilde c_\epsilon\delta$. 
   By \eqref{cuu}, we obtain 
$$ 
        \int\limits_0^{\mathcal{L}(a,i)}e^{(p-1)\lambda(\Gamma_i)}\left\vert k_{\R^2}(\Gamma_i)+\dfrac{1}{2}e^{3\lambda(\Gamma_i)}\Gamma_i(s)\cdot \tilde{\gu}(s)\right\vert^p\, ds\geq\int\limits_0^{\mathcal{L}(a,i)}\tilde{c}_\epsilon\Big||k_{\R^2}(\Gamma_i)|-C_\delta\delta\Big|^p\, ds.
$$
Using \eqref{pstima}, we obtain $$\tilde{c}_\epsilon\int\limits_0^{\mathcal{L}(a,i)}\Big||k_{\R^2}(\Gamma_i)|-C_\epsilon\delta\Big|^p\, ds\geq\tilde{c}_\epsilon\int\limits_0^{\mathcal{L}(a,i)}|k_{\R^2}(\Gamma_i)|^p\,ds-C\delta\int\limits_0^{\mathcal{L}(a,i)}|k_{\R^2}(\Gamma_i)|^{p-1}\,ds, $$ where $C\coloneqq C(\epsilon,p)$ is a positive constant that is bounded as $\epsilon\rightarrow 0$, that can change line by line.
    We denote by $I_1,I_2$
    $$ \begin{aligned}
        &I_1\coloneqq\sum_{i=1}^{h-1}\tilde{c}_\epsilon\dashint_0^\ell\left(\int\limits_0^{\mathcal{L}(a,i)}|k_{\R^2}(\Gamma_i(s))|^p\right)\,da\\
        &I_2\coloneqq\sum_{i=1}^{h-1}C\delta\dashint_0^\ell \left(\int\limits_0^{\mathcal{L}(a,i)}|k_{\R^2}(\Gamma_i(s))|^{p-1}\,ds\right)\,da
    \end{aligned}$$
    We first deal with $I_1$.
    
    In the Euclidean setting, if the curve $\Gamma_i$ is parametrized by arc length, then the curvature is given by the norm of the second derivative $\Gamma_i''(s)$ of the curve, i.e., 
   $$
\int\limits_0^{\mathcal{L}(a,i)}|k_{\R^2}(\Gamma_i(s))|^p\, ds=\int\limits_0^{\mathcal{L}(a,i)}\norm{\Gamma_i^{''}(s)}^p\, ds\,. $$  
Applying twice the Jensen inequality and supposing $t_{i+1}-t_i\geq t_i-t_{i-1}$, we have 
$$   \begin{aligned}
       \tilde{c}_\epsilon\dashint_0^\ell   \left(\int\limits_0^{\mathcal{L}(a,i)}\norm{\Gamma_i^{''}(s)}^p\, ds\right)\,da&\geq\tilde{c}_\epsilon\dashint_0^\ell \| \Gamma_i^{'}(\psi(s_{i}+a))-\Gamma_i^{'}(\psi(s_{i-1}+a)) \|^p\, da\\ 
      & \geq  \tilde{c}_\epsilon\ell^{1-p}\left\|\int_0^\ell\left(\Gamma_i^{'}(\psi(s_{i}+a))-\Gamma_i^{'}(\psi(s_{i-1}+a))\right)\, da\right\|^p\\ & \geq\tilde{c}_\epsilon\ell^{1-p}\left\|  \dfrac{\Gamma_i(t_{i+1})-\Gamma_i(t_i)}{t_{i+1}-t_i}- \dfrac{\Gamma_i(t_{i})-\Gamma_i(t_{i-1})}{t_{i}-t_{i-1}}\right\|^p.
   \end{aligned}$$
   Finally, using \cite[Theorem 5.3]{MS1}, we have $$\tilde{c}_\epsilon\dashint_0^\ell\int_0^{\mathcal{L}(a,i)}\norm{\Gamma_i^{''}(s)}^p\, ds\,da \geq  \tilde{c}_\epsilon\ell^{1-p}\theta_i^p.$$  Thanks to the conformality of the projection, the turning angles $\theta_i$ for $i=1,\ldots ,h-1$ are exactly the turning angles of $P_h$ and by \eqref{primadis} there holds
   $$I_1\geq  \tilde{c}_\epsilon\sum_{i=1}^{h-1}\ell^{1-p}\theta_i^p\geq \tilde{c}_\epsilon\mathbf{k}_p(P_h).$$ 
   
   Now, we prove that $I_2\leq g(\epsilon)\rightarrow 0$ as $\epsilon\rightarrow 0$.\\
   By Remark~\ref{AC}, we have 
$$\int\limits_0^{\mathcal{L}(a,i)}\left\vert k_{\R^2}(\Gamma_i(s))\right\vert^{q}\,ds \leq\epsilon^{q/p}\delta^{(p-q)/p}.$$
Then, using $q=p-1$ we obtain
$$\begin{aligned}
    I_2=&\sum_{i=1}^{h-1}C\delta\left(\dashint_0^\ell \int\limits_0^{\mathcal{L}(a,i)}|k_{\R^2}(\Gamma_i(s))|^{p-1}\,ds\right)\,da\\
    \leq&\sum_{i=1}^{h-1}C\delta\epsilon^{(p-1)/p}\delta^{1/p}
\end{aligned} $$
Summing over the index $i$ and using \eqref{hconstr}, we get 
$$\begin{aligned}
    &\sum_{i=1}^{h-1}C\epsilon^{(p-1)/p}\delta^{1+\tfrac{1}{p}}\leq C\epsilon^{(p-1)/p}\delta^{\tfrac{1}{p}}\coloneqq g(\epsilon).
\end{aligned}$$
   Then, putting all together
   $$\int_0^L|k_{\mathbb{S}^2}(c(s))|^p=I_1-I_2\geq\tilde{c}_\epsilon\mathbf{k}_p(P_h)-g(\epsilon).$$
   Now the theorem follows letting $P_h=P_{\epsilon_h}$ for a suitable decreasing sequence $\epsilon_
   h\rightarrow 0$.

\end{proof}

\appendix
\renewcommand\sectionname{Appendix}
\setcounter{section}{0}
\renewcommand{\thesection}{\Alph{section}}
\section{Construction of spherical bends}\label{AA}
In this appendix, we explain how to construct from a given polygonal $P$ with vertexes $P(t_i)$ and turning angles $\theta_i$ for $i=1,\ldots,h-1$ a local bending curve around $P(t_i)$ that glues $\mathcal{C}^1$ with $P$ near to $t_i$ with constant geodesic curvature. Working locally, up to isometry, we can suppose that the polygonal near $P(t_i)$ is given by $$ P(t)=\begin{cases}
    (-K\sin t,S\sin t,\cos t) & t\in[-\delta,0] \\
(K\sin t,S\sin t,\cos t) & t\in[0,\delta]\,
.\end{cases}$$
where $K:=\cos\alpha_i$ and $S:=\sin\alpha_i$, with $\alpha_i=(\pi-\theta_i)/2$.
We want to perform the construction only on half of each geodesic segment with $\delta=\ell/2.$
\begin{constr}\label{constr}
    
    We start with the localized polygonal curve 
$$ P(t)=\begin{cases}
    (-K\sin t,S\sin t,\cos t) & t\in[-\ell/2,0] \\
(K\sin t,S\sin t,\cos t) & t\in[0,\ell/2]\, 
.\end{cases}$$
where $K:=\cos\alpha$ and $S:=\sin\alpha$, with $\alpha=(\pi-\theta)/2$.
Then,
$$ P'(t)=\begin{cases}
    (-K\cos t,S\cos t,-\sin t) & t\in(-\ell/2,0) \\
(K\cos t,S\cos t,-\sin t) & t\in(0,\ell/2)\,.
\end{cases}  $$
Denote $P_{\pm}:=P(\pm\ell/2)$ and $v_\pm:=P'(\pm\ell/2)$, so that with $s:=\sin(\ell/2)$ and $c=\cos(\ell/2)$ we have:
$$ P_-=(Ks,-S s,c)\,,\quad P_{+}=(Ks,S s,c)\,,\quad v_-=(-Kc,S c,s)\,,\quad v_+=(Kc,S s,-s)\,. $$
We choose a rotation matrix $R\in SO(3)$ with rotational axis $e_2$ and angle $\beta$. With $\tau=\cos\beta$ and $\sigma=\sin\beta$, we have
$$ v_-R^T=(-(\tau Kc+\sigma s), S c, -(\sigma K c-\tau s))\,,\quad v_+R^T=((\tau Kc+\sigma s), S c, (\sigma K c-\tau s)) $$
and choose $\beta$ so that $\sigma K c-\tau s=0$, i.e.,
$$ \tau=\frac{Kc}{\sqrt{s^2+K^2c^2}}\,,\quad \sigma=\frac{s}{\sqrt{s^2+K^2c^2}}\,. $$
This way, the rotated velocity vectors $V_\pm R^T$ at the end points have zero third components. We correspondingly have
$$ P_-R^T=(x,-y,z)\,, \quad P_+R^T=(x,y,z)\,,\quad x:=\tau Ks-\sigma c\,,\quad y:=S s\,,\quad z:=\sigma Ks+\tau c\,, $$ 
so that explicitly
$$ x=-\frac{S^2sc}{\sqrt{s^2+K^2c^2}}\,,\quad z=\frac{K}{\sqrt{s^2+K^2c^2}}\,. $$
We now choose the angle $\Phi\in(0,\pi)$ so that
$$ \cos\Phi=z\,,\quad \sin\Phi=\sqrt{x^2+y^2}=\frac{-S s}{\sqrt{s^2+K^2c^2}}\,. $$
This way, we reduce to compute the integral of the $p$-th power of the curvature of the parallel $\gamma(P)$ connecting the rotated points $P_\pm R^T$. We set
$$ \gamma(P)(s)=(\sin\Phi\cos s,\sin\Phi\sin s,\cos\Phi) $$
and imposing that $\gamma(P)(s_0)=(-x,y,z)$, we infer that $\sin s_0=\sqrt{s^2+K^2c^2}$ and $\cos s_0=-S c$. Therefore, the curve $\gamma(P)$ has length
$$ \mathcal{L}(\gamma(P))=2\arctan\left(\frac{\sqrt{s^2+K^2c^2}}{-S c}\right)\cdot R\,,\quad R=\sin\Phi\,. $$
Moreover, the geodesic curvature density of $\gamma(P)$ is equal to the constant
$$ k_{\mathbb{S}^2}=\cot\Phi=\frac{K}{-S s}\,, $$
and we definitely obtain
$$ \int_{\gamma(P)}|k_{\mathbb{S}^2}|^p\,ds=2\arctan\left(\frac{\sqrt{s^2+K^2c^2}}{-S c}\right)\cdot \frac{S s}{\sqrt{s^2+K^2c^2}}\cdot \left(\frac{K}{-S s}\right)^p\,. $$
Replacing $K=\sin(\theta/2)$ and $S=-\cos(\theta/2)$, we finally get for every $p\geq 1$ the quantity
$$ \mathcal{F}_p(\ell,\theta)\coloneqq 2\arctan\left(\frac{\Psi(\ell,\theta)}{\cos(\theta/2)\cos\ell/2}\right)\cdot \frac 1{\Psi(\ell,\theta)}\cdot \frac{\sin^p(\theta/2)}{(\cos(\theta/2)\sin\ell/2)^{p-1}},$$ where $$h(\ell,\theta)\coloneqq\sqrt{\sin^2\ell/2+\sin^2(\theta/2)\cos^2\ell/2}.$$
    We conclude observing that this quantity is preserved under isometries.
\end{constr}
\nocite{MS2b}

\end{document}